\documentclass[12pt]{article}
\usepackage{amsmath,amssymb,amsbsy,amsfonts,amsthm}

\begin{document}

\newtheorem{theorem}{Theorem}
\newtheorem{lemma}[theorem]{Lemma}
\newtheorem{cor}[theorem]{Corollary}
\newtheorem{prop}[theorem]{Proposition}

\newcommand{\comm}[1]{\marginpar{%
\vskip-\baselineskip 
\raggedright\footnotesize
\itshape\hrule\smallskip#1\par\smallskip\hrule}}

\def\cA{{\mathcal A}}
\def\cB{{\mathcal B}}
\def\cC{{\mathcal C}}
\def\cD{{\mathcal D}}
\def\cE{{\mathcal E}}
\def\cF{{\mathcal F}}
\def\cG{{\mathcal G}}
\def\cH{{\mathcal H}}
\def\cI{{\mathcal I}}
\def\cJ{{\mathcal J}}
\def\cK{{\mathcal K}}
\def\cL{{\mathcal L}}
\def\cM{{\mathcal M}}
\def\cN{{\mathcal N}}
\def\cO{{\mathcal O}}
\def\cP{{\mathcal P}}
\def\cQ{{\mathcal Q}}
\def\cR{{\mathcal R}}
\def\cS{{\mathcal S}}
\def\cT{{\mathcal T}}
\def\cU{{\mathcal U}}
\def\cV{{\mathcal V}}
\def\cW{{\mathcal W}}
\def\cX{{\mathcal X}}
\def\cY{{\mathcal Y}}
\def\cZ{{\mathcal Z}}

\def\C{\mathbb{C}}
\def\F{\mathbb{F}}
\def\K{\mathbb{K}}
\def\Z{\mathbb{Z}}
\def\R{\mathbb{R}}
\def\Q{\mathbb{Q}}
\def\N{\mathbb{N}}

\def\({\left(}
\def\){\right)}
\def\[{\left[}
\def\]{\right]}
\def\<{\langle}
\def\>{\rangle}

\def\e{e}

\def\eq{\e_q}
\def\eT{\e_T}

\def\fl#1{\left\lfloor#1\right\rfloor}
\def\rf#1{\left\lceil#1\right\rceil}
\def\mand{\qquad\mbox{and}\qquad}

\title{\bf Bounds of Multiplicative Character Sums with Shifted Primes}

\date{ }
\author{
{\sc   Bryce Kerr} \\
{Department of Computing, Macquarie University} \\
{Sydney, NSW 2109, Australia} \\
{\tt  bryce.kerr@mq.edu.au}}

\date{}

\maketitle
 
\begin{abstract}
For integer $q$, let $\chi$ be a primitive multiplicative character$\pmod q.$ For integer $a$ coprime to $q$, we obtain a new bound for the sums
$$\sum_{n\le N}\Lambda(n)\chi(n+a),$$
where $\Lambda(n)$ is the von Mangoldt function. This bound improves and extends the range of a result of Friedlander, Gong and Shparlinski. \end{abstract}

\paragraph{AMS Subject Classification Numbers:} 11L20, 11L40.


\section{Introduction}

Let $q$ be an arbitrary positive integer and let $\chi$ be a primitive
non-principal multiplicative character$\pmod q$. Our goal is to estimate character sums of the form
\begin{equation}\label{eqn:vonM}
S_a(q;N)= \sum_{n\leq N}\Lambda(n)\chi(n + a),
\end{equation}
where $a$ is an integer relatively prime to $q$ and as usual,
$$
\Lambda(n)=
\begin{cases}
  \log p,   & \quad\text{if}~n~\text{is a power of a prime}~p, \\
       0,   & \quad\text{otherwise},
\end{cases}
$$
is the von Mangoldt function. For prime modulus $q$, Karatsuba~\cite{Kar} has given
a nontrivial estimate of the sums $S_a(q;N)$ in the range
$N>q^{1/2 + \varepsilon}$. Recently, much more general sums over primes have been considered by Fouvry, Kowalski and Michel~\cite{FKM}. A special case of their general result (see~\cite[Corollary 1.12]{FKM}) gives nontrivial bounds for character sums to prime modulus  $q$, with a very general class of rational functions over primes, which is nontrivial provided $N>q^{3/4+\varepsilon}$. Rakhmonov~\cite{Rakh0,Rakh} has
shown that nontrivial cancellations in
the sums $S_a(q;N)$ also occur in the more difficult case of general
modulus $q$, but only in the narrower range $N>q^{1 + \varepsilon}$.
This range has been extended in~\cite{FGS} to  $N>q^{8/9 + \varepsilon}$,
where the bound
\begin{equation}
\label{FGS1}
|S_a(q;N)| \le (N^{7/8}q^{1/9} +
N^{33/32}q^{-1/18})q^{o(1)},
\end{equation}
is given for  $N\le q^{16/9}$ (since for $N > q^{16/9}$,
the result of~\cite{Rakh} already produces a strong estimate).
Here we give a further improvement.

\begin{theorem}\label{thm:main}
For $N\le q$, we have

$$|S_a(q;N)|\le \left(Nq^{-1/24}+q^{5/42}N^{6/7}\right)q^{o(1)}.$$
\end{theorem}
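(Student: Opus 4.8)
The plan is to run the standard prime-detecting machinery --- a combinatorial identity for $\Lambda$ followed by estimates for the resulting bilinear forms --- but with the Burgess method supplying the cancellation, since the range $N\le q$ is precisely the regime where naive completion fails. First I would feed $\Lambda(n)$, for $n\le N$, through Vaughan's identity, which after dyadic decomposition and at the cost of a factor $q^{o(1)}$ expresses $S_a(q;N)$ as a bounded number of \emph{Type I} sums
$$\sum_{m\sim M}\alpha_m\sum_{k\sim K}\chi(mk+a),\qquad MK\asymp N,$$
with $\alpha_m$ divisor-like and the $k$-range long, together with \emph{Type II} sums
$$\sum_{m\sim M}\sum_{k\sim K}\alpha_m\beta_k\,\chi(mk+a),\qquad MK\asymp N,$$
with $|\alpha_m|,|\beta_k|\le q^{o(1)}$ and $M,K$ in a middle range. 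The structural difficulty is already visible here: because $N\le q$ we cannot have both $M$ and $K$ as large as $q^{1/2}$, so completing any single inner sum modulo $q$ is lossy, and the savings must instead come from short-interval cancellation.

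For the Type I sums, when $\gcd(m,q)=1$ I would use $\chi(mk+a)=\chi(m)\,\chi(k+a\overline{m})$, where $\overline{m}$ is the inverse of $m$ modulo $q$, so that the inner sum becomes the pure incomplete character sum $\sum_{k\sim K}\chi(k+a\overline{m})$; to this the Burgess bound applies directly and yields cancellation once $K$ exceeds roughly $q^{1/4}$. Summing against the $q^{o(1)}$ coefficients $\alpha_m$ and disposing of the $\gcd(m,q)>1$ terms by a direct count bounds the Type I contribution, the bound degrading as $K$ shrinks --- one of the forces constraining the Vaughan parameters.

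The Type II sums are the heart of the argument. I would apply Cauchy--Schwarz in $m$ to remove $\alpha_m$ and open the square, reaching
$$\sum_{k_1,k_2\sim K}\beta_{k_1}\overline{\beta_{k_2}}\sum_{m\sim M}\chi(mk_1+a)\overline{\chi(mk_2+a)}.$$
The diagonal $k_1=k_2$ returns the main term, of order $\sqrt{MN}\,q^{o(1)}$ after the normalization, while for $k_1\ne k_2$ the inner sum is $\chi$ evaluated along the fractional-linear function $m\mapsto(mk_1+a)/(mk_2+a)$, a genuinely incomplete sum that I would again estimate by the Burgess method: shift $m$ over a short auxiliary range, use the multiplicativity of $\chi$ to reduce to many copies of a shorter sum, raise to a $2r$-th power by H\"older, and reduce to a complete sum of a product of $2r$ such fractional-linear factors. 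The main obstacle, and the place where working over an \emph{arbitrary} modulus $q$ really bites, is to bound these complete sums with square-root cancellation uniformly in $q$: this forces a reduction to prime-power moduli by the Chinese Remainder Theorem, the use of bounds for complete multiplicative character sums of rational functions to prime-power modulus, and careful isolation of the degenerate $(k_1,k_2)$ for which the fractional-linear function collapses and no cancellation is available.

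Finally I would balance all contributions --- the Type I bound, the Type II diagonal $\sqrt{MN}$, and the Burgess-estimated off-diagonal --- against the free parameters, namely the Vaughan cut-offs controlling the ranges $M,K$ and the Burgess moment $r$. The denominators $24$ and $42$ appearing in the target bound strongly suggest that the optimal $r$ is small and fixed, and that the exponents $q^{-1/24}$ and $q^{5/42}N^{6/7}$ emerge only after this optimization is carried out against the constraint $N\le q$. I expect the delicate bookkeeping of this optimization, combined with the uniform-in-$q$ treatment of the off-diagonal complete sums, to be the most demanding part of the proof.
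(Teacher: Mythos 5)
Your skeleton (Vaughan's identity, Type I sums via Burgess, Type II sums via Cauchy--Schwarz) matches the paper's, but your plan for the Type II off-diagonal terms has a genuine gap, and it sits exactly where the paper's main new idea lies. After Cauchy--Schwarz you propose to estimate each incomplete sum $\sum_{m\sim M}\chi(mk_1+a)\overline{\chi(mk_2+a)}$ by running the full Burgess amplification (shifting, H\"older with exponent $2r$, complete sums of products of $2r$ fractional-linear factors) for an \emph{arbitrary} composite modulus $q$. The input this requires --- square-root cancellation, uniformly in $q$, for complete sums $\sum_{x\bmod q}\chi(R(x))$ with $R$ a product of $2r$ fractional-linear factors, together with a gcd-weighted count of the degenerate shift tuples --- does not exist and is not a routine CRT reduction. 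For prime-power moduli such sums genuinely fail square-root cancellation in degenerate configurations (this is why Burgess-type bounds carry factors like $(q,A_i)$), and controlling those configurations is precisely why Burgess's composite-modulus machinery exists only for $r=2,3$ and only for products of \emph{linear} factors $\prod_i(x+v_i)$. The degeneracy is also not just a matter of "isolating bad $(k_1,k_2)$": it lives in the $2r$ auxiliary shift parameters introduced by the amplification. What you need is strictly harder than anything in the paper (which itself must work hard to handle even linear factors $x+dv_i$ with $d\mid q$ at $r=2,3$), and you give no route to it.

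The paper sidesteps this entirely: in Lemma~\ref{lem:DoubleChar2} it \emph{completes} the sum over the Cauchy--Schwarzed variable with additive characters, so the only complete sums needed are $\sum_{\lambda\bmod q}\chi(\lambda+a\ell_1^{-1})\overline{\chi}(\lambda+a\ell_2^{-1})e^{2\pi i s\lambda/q}$ --- a \emph{single} fractional-linear factor with an additive twist. A Gauss-sum identity (Lemma~\ref{trans sum}) converts these into $\sum_{n}\chi(1+b/n)e^{2\pi i\lambda n/q}$, which factor over prime powers by CRT and are bounded by $(b,q)q^{1/2+o(1)}$ via Burgess's classical prime-power lemmas (Lemma~\ref{character sum bound 11}); the resulting gcd factors $(\ell_1-\ell_2,q)$ are summed trivially, and Garaev's trick (Lemma~\ref{lem:DoubleCharVar4}) handles the $k$-dependent summation limits. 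This completion is what the paper explicitly credits for pushing the range from $N>q^{8/9+o(1)}$ to $N>q^{5/6+o(1)}$. Two smaller points on your Type I step: for composite $q$ the inner sums carry the constraint $(k,q)=1$, so after M\"obius one needs bounds for $\sum\chi(dk+a)$ with $d\mid q$, which is why the paper proves the new mean values over shifts $dv_i$ and Lemmas~\ref{burgess 2 1} and~\ref{burgess 3 1}; and since for arbitrary $q$ only $r=2,3$ are usable, cancellation begins at length about $q^{1/3+\varepsilon}$, not the $q^{1/4}$ you assert.
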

We see that our new bound is nontrivial for $N>q^{5/6+\varepsilon}$ which gives an improvement over the range $N>q^{8/9+\varepsilon}$ of ~\cite{FGS}. \newline
Our method is based on techniques already established in~\cite{FGS}
however we also introduce some new ideas in the scheme. Our improvement comes from new bounds for the sums (see Lemma~\ref{burgess 2 1} and Lemma~\ref{burgess 3 1})
\begin{equation}
\label{type 1}
\sum_{\substack{M<n\le M+N \\ (n,q)=1}}\chi(n+a),
\end{equation}
and the bilinear forms (see Lemma~\ref{lem:DoubleCharVar4})
\begin{equation}
\label{type 2}
\sum_{k\le K}\sum_{\ell\le L}
\alpha_k\,\beta_\ell\,\chi(k\ell+a).
\end{equation}
To bound the sums~\eqref{type 1} we use standard techniques for dealing with character sums to reduce the problem to bounding the mean value
\begin{equation}
\label{MV111}
\sum_{v_1,\ldots,v_{2r}=1}^V\left|\sum_{x=1}^q\chi\(\prod_{i=1}^r(x+dv_i)\)
\overline{\chi}\(\prod_{i=r+1}^{2r}(x+dv_i)\)\right|, \quad r=2,3,
\end{equation}
which we deal with using techniques of Burgess~\cite{Burg1,Burg3}. To bound the bilinear forms~\eqref{type 2}, we apply the Cauchy-Schwartz inequality, interchange summation and complete the resulting sums. This allows us to reduce the problem to bounding the sums
$$\sum_{\substack{n=1 \\ (n,q)=1}}^{q}\chi\left(1+\frac{b}{n}\right)e^{2 \pi i \lambda n/q},$$
which are dealt with using ideas based on Burgess~\cite{Burg1}. Finally, we note that our new bound for the sums~\eqref{type 1} when combined with the argument from~\cite{FGS} improves on the bound~\eqref{FGS1} although not the range of $N$ for which this bound becomes nontrivial. Our new bound for the  bilinear forms~\eqref{type 2} is what increases the range from $N>q^{8/9+o(1)}$ to $N>q^{5/6+o(1)}$ for which the bound for $|S_a(q;N)|$ becomes nontrivial.
\subsection*{Acknowledgement}
The author would like to thank Igor Shparlinski for his very useful suggestions, particulary in the section on bilinear character sums.
\section{Preliminaries}\label{sec Prelim}
As in~\cite{FGS} our basic tool is the Vaughan identity~\cite{Vau}.
\begin{lemma}\label{lem:Vau} For any complex-valued function $f(n)$ and
any real numbers $U,\, V>1$ with $UV\le N$, we have
$$
S_a(q;N)\ll\Sigma_1+\Sigma_2+\Sigma_3+|\Sigma_4|,
$$
where
\begin{eqnarray*}
\Sigma_1 & = & \left|\sum_{n\le U}\Lambda(n)f(n)\right|, \\
\Sigma_2 & = & (\log UV)\sum_{v \le UV}\left|\sum_{s\le N/v}f(sv)\right|, \\
\Sigma_3 & = & (\log N)\sum_{v\le V}\,\max_{w\ge 1}
\left|\sum_{w\le s\le N/v}f(sv)\right|, \\
\Sigma_4 & = & \sum_{\substack{k\ell\le N\\k>V,\,\ell>U}}\Lambda(\ell)
\sum_{d\mid k,\,d\leq V}\mu(d)f(k\ell).
\end{eqnarray*}
where $\mu(d)$ denotes the M{\"o}bius function, defined by
$$\mu(d)=\begin{cases}(-1)^{\omega(d)}, \quad \text{if} \ \ n  \ \ \text{squarefree}, \\
0, \quad \quad \quad \quad  \text{otherwise}, \quad  \end{cases}$$
and $\omega(d)$ counts the number of distinct prime factors of $d$.
\end{lemma}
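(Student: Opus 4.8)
The plan is to obtain the decomposition from Vaughan's identity by extracting Dirichlet-series coefficients and matching each piece to one of the $\Sigma_i$. Writing $F(s)=\sum_{m\le U}\Lambda(m)m^{-s}$ and $G(s)=\sum_{d\le V}\mu(d)d^{-s}$, the starting point is the algebraic identity
$$-\frac{\zeta'}{\zeta}=F-\zeta F G-\zeta' G+\left(-\frac{\zeta'}{\zeta}-F\right)(1-\zeta G),$$
which is verified by expanding the last product and cancelling; no analytic properties of $\zeta$ are used, so it is really an identity of formal Dirichlet series whose $n$-th coefficients I can read off one factor at a time. Since $-\zeta'/\zeta=\sum_n\Lambda(n)n^{-s}$, comparing coefficients of $n^{-s}$ and then multiplying by $f(n)$ and summing over $n\le N$ expresses $\sum_{n\le N}\Lambda(n)f(n)$ as a sum of four terms, one from each summand on the right.

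Then I would identify the four terms. The coefficient of $F$ is $\Lambda(n)$ restricted to $n\le U$, giving exactly $\Sigma_1$. For $-\zeta F G$ the coefficient of $n^{-s}$ is $\sum_{bce=n,\,b\le U,\,c\le V}\Lambda(b)\mu(c)$; grouping $v=bc\le UV$ and summing over the free variable $e$, its contribution is bounded in absolute value by $\sum_{v\le UV}\big|\sum_{bc=v}\Lambda(b)\mu(c)\big|\,\big|\sum_{s\le N/v}f(sv)\big|$, and since $\big|\sum_{bc=v}\Lambda(b)\mu(c)\big|\le\sum_{b\mid v}\Lambda(b)=\log v\le\log UV$ this is $\Sigma_2$. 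The term $-\zeta' G$ has coefficient $\sum_{ce=n,\,c\le V}\mu(c)\log e$; for fixed $c=v\le V$ the weight $\log e$ is removed by partial summation, $\big|\sum_{e\le N/v}(\log e)f(ve)\big|\le(\log N)\max_{w\ge1}\big|\sum_{w\le s\le N/v}f(sv)\big|$, which together with $|\mu|\le1$ yields $\Sigma_3$.

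The remaining term $(-\zeta'/\zeta-F)(1-\zeta G)$ is the bilinear piece $\Sigma_4$. Here the first factor is the tail $\sum_{m>U}\Lambda(m)m^{-s}$, and the key observation is that the $k$-th coefficient of $1-\zeta G$, namely $[k=1]-\sum_{d\mid k,\,d\le V}\mu(d)$, vanishes for all $k\le V$: it is zero at $k=1$, while for $1<k\le V$ every divisor of $k$ is $\le V$ and $\sum_{d\mid k}\mu(d)=0$. Hence this factor is supported on $k>V$, and the product, summed against $f$ over $n=k\ell\le N$, is exactly $\Sigma_4$ up to the overall sign absorbed by the absolute value. Collecting the four contributions and applying the triangle inequality gives $\sum_{n\le N}\Lambda(n)f(n)\ll\Sigma_1+\Sigma_2+\Sigma_3+|\Sigma_4|$, as claimed, with $f(n)=\chi(n+a)$ recovering $S_a(q;N)$.

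Since this is a classical identity, there is no serious obstacle; the only points requiring care are the support computation for $1-\zeta G$ via $\sum_{d\mid k}\mu(d)=0$, which is what confines $\Sigma_4$ to $k>V$, and the partial summation that converts the $\log e$ weight in the $-\zeta' G$ term into the $\max_w$ over truncated intervals appearing in $\Sigma_3$.
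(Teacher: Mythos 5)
Your proof is correct and is essentially the argument the paper relies on: the paper states this lemma without proof, citing Vaughan \cite{Vau}, and your derivation from the formal Dirichlet-series identity $-\zeta'/\zeta=F-\zeta FG-\zeta'G+\left(-\zeta'/\zeta-F\right)(1-\zeta G)$, including the key support observation that the coefficients of $1-\zeta G$ vanish for $k\le V$ because $\sum_{d\mid k}\mu(d)=0$, is the standard proof of that identity. The four coefficient identifications, the bound $\bigl|\sum_{bc=v}\Lambda(b)\mu(c)\bigr|\le\log v\le\log UV$ for $\Sigma_2$, and the partial-summation removal of the $\log$ weight for $\Sigma_3$ all check out.
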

\section{P\'{o}lya-Vinogradov Bound}
The following is~\cite[Lemma~4]{FGS}. 
\begin{lemma}
\label{polya vinogradov 1}
For any integers $d,M,N,a$ with $(a,q)=1$ and any primitive character $\chi \pmod q$ we have
$$\left|\sum_{M<n\le M+N}\chi(dn+a)\right|\le (d,q)\frac{N}{q^{1/2}}+q^{1/2+o(1)}.$$
\end{lemma}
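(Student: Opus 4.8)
The plan is to establish this as a standard P\'olya--Vinogradov estimate, obtained by completing the sum through the Fourier expansion of $\chi$, while tracking carefully how the factor $(d,q)$ enters the main term.

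First I would invoke the Gauss sum attached to the primitive character $\chi$. Because $\chi$ is primitive mod $q$, the identity $\chi(m)\tau(\overline\chi)=\sum_{t\bmod q}\overline\chi(t)\,e^{2\pi i mt/q}$ holds for \emph{every} integer $m$, not merely those coprime to $q$, where $\tau(\overline\chi)=\sum_{t\bmod q}\overline\chi(t)e^{2\pi i t/q}$ satisfies $|\tau(\overline\chi)|=q^{1/2}$. Applying this with $m=dn+a$ and interchanging the order of summation gives
$$\sum_{M<n\le M+N}\chi(dn+a)=\frac{1}{\tau(\overline\chi)}\sum_{t\bmod q}\overline\chi(t)\,e^{2\pi i at/q}\sum_{M<n\le M+N}e^{2\pi i dnt/q}.$$

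Second, the inner sum over $n$ is a geometric progression, so it is bounded by $\min\!\left(N,\,1/(2\,\|dt/q\|)\right)$, where $\|\cdot\|$ denotes the distance to the nearest integer. Using $|\overline\chi(t)|\le 1$ together with $|\tau(\overline\chi)|=q^{1/2}$, I obtain
$$\left|\sum_{M<n\le M+N}\chi(dn+a)\right|\le \frac{1}{q^{1/2}}\sum_{t\bmod q}\min\!\left(N,\frac{1}{2\,\|dt/q\|}\right).$$
Third, I would evaluate the completed sum according to $e=(d,q)$. As $t$ runs over residues mod $q$, the quantity $dt\bmod q$ runs exactly over the multiples of $e$, each attained $e$ times; writing $Q=q/e$, the sum collapses to $e\sum_{s\bmod Q}\min\!\left(N,1/(2\|s/Q\|)\right)$. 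The term $s=0$ contributes $eN$, while the standard estimate $\sum_{1\le s\le Q-1}\min\!\left(N,1/(2\|s/Q\|)\right)\ll Q\log Q$ handles the rest, contributing $\ll eQ\log Q=q\log q$. Dividing by $q^{1/2}$ then yields $(d,q)N/q^{1/2}+O(q^{1/2}\log q)$, which is the claimed bound once the logarithm is absorbed into $q^{o(1)}$.

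The only genuinely delicate point is the treatment of the common factor: I must lean on the primitivity of $\chi$ to justify the Gauss-sum identity for all residues $m$ (including those sharing a factor with $q$), and then follow how the multiplicity $e=(d,q)$ produces precisely the main term $(d,q)N/q^{1/2}$ rather than a weaker quantity. The geometric-sum bound and the divisor-sum estimate are routine.
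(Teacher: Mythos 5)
Your proof is correct. The paper does not prove this lemma itself (it simply quotes it as Lemma~4 of Friedlander--Gong--Shparlinski), and your argument --- completing the sum via the Gauss-sum expansion $\chi(m)\tau(\overline\chi)=\sum_{t \bmod q}\overline\chi(t)e^{2\pi i mt/q}$, which primitivity makes valid for all $m$, then bounding the geometric sums and tracking how the multiplicity $e=(d,q)$ collapses the completed sum to produce the main term $(d,q)N/q^{1/2}$ --- is exactly the standard P\'olya--Vinogradov argument underlying the cited result, with all the delicate points (primitivity, the gcd bookkeeping) handled correctly.
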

Lemma~\ref{polya vinogradov 1} was used to show~\cite[Lemma~5]{FGS}.
\begin{lemma}
\label{polya vinogradov}
For any integers $M,N,a$ with $(a,q)=1$ and any primitive character $\chi \pmod q$ we have
$$\left|\sum_{\substack{M<n\le M+N \\ (n,q)=1}}\chi(n+a)\right|\le q^{1/2+o(1)}+Nq^{-1/2}.$$
\end{lemma}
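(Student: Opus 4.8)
The plan is to reduce Lemma~\ref{polya vinogradov} to the already-available Lemma~\ref{polya vinogradov 1} by removing the coprimality constraint $(n,q)=1$ through Möbius inversion. First I would detect the condition $(n,q)=1$ via the identity $\sum_{d\mid(n,q)}\mu(d)$, which equals $1$ when $(n,q)=1$ and $0$ otherwise. Writing the common divisors as those $d$ with $d\mid q$ and $d\mid n$, and interchanging the order of summation, gives
$$\sum_{\substack{M<n\le M+N \\ (n,q)=1}}\chi(n+a)=\sum_{d\mid q}\mu(d)\sum_{\substack{M<n\le M+N \\ d\mid n}}\chi(n+a).$$

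Next I would substitute $n=dm$ in each inner sum. The constraint $M<n\le M+N$ becomes $M/d<m\le(M+N)/d$, an interval of length $N/d$, and $\chi(n+a)=\chi(dm+a)$. Hence each inner sum has exactly the shape treated by Lemma~\ref{polya vinogradov 1} (with $N$ replaced by $N/d$), and since $d\mid q$ we have $(d,q)=d$. Applying that lemma therefore yields
$$\left|\sum_{\substack{M<n\le M+N \\ d\mid n}}\chi(n+a)\right|\le d\,\frac{N/d}{q^{1/2}}+q^{1/2+o(1)}=\frac{N}{q^{1/2}}+q^{1/2+o(1)}.$$

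The key observation is that the factor $(d,q)=d$ exactly cancels the $1/d$ coming from the shortened interval, so that each divisor contributes the same bound $Nq^{-1/2}+q^{1/2+o(1)}$, independent of $d$. Summing over the divisors of $q$ and using the trivial estimate $|\mu(d)|\le 1$, I obtain a total of at most $\tau(q)\left(Nq^{-1/2}+q^{1/2+o(1)}\right)$, where $\tau(q)$ denotes the number of divisors of $q$. Since $\tau(q)=q^{o(1)}$, this divisor factor is absorbed into the $q^{o(1)}$, each term acquiring at most an additional $q^{o(1)}$ factor, which gives the claimed bound $q^{1/2+o(1)}+Nq^{-1/2}$. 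There is no serious obstacle here; the only point requiring care is that the cancellation $(d,q)\cdot(N/d)=N$ is precisely what prevents the contributions of the divisors $d$ from accumulating powers of $d$, leaving only the harmless $\tau(q)=q^{o(1)}$ loss.
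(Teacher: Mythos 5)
Your proof is correct and is essentially the argument the paper relies on: the paper gives no proof of its own, quoting the statement as~\cite[Lemma~5]{FGS} and noting that it follows from Lemma~\ref{polya vinogradov 1}, precisely via the M\"obius--divisor decomposition and the cancellation $(d,q)\cdot(N/d)=N$ that you use. The only cosmetic differences are that the endpoints $M/d$ need not be integers (a floor adjustment adds at most $d/q^{1/2}\le q^{1/2}$ per divisor, which is harmless) and that your $\tau(q)=q^{o(1)}$ factor lands on both terms, giving $Nq^{-1/2+o(1)}$ rather than $Nq^{-1/2}$, which is immaterial in every application.
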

\section{Burgess Bounds}

In~\cite{FGS}, the Burgess bound for the sums 
\begin{equation*}
\label{burgess sum 1}
\sum_{v_1,\ldots,v_{2r}=1}^V\left|\sum_{x=1}^q\chi\(\prod_{i=1}^r(x+v_i)\)
\overline{\chi}\(\prod_{i=r+1}^{2r}(x+v_i)\)\right| \quad r=2,3,
\end{equation*}
was used to improve on Lemma~\ref{polya vinogradov} for small values of $N$. We a give further improvement by using the methods of Burgess~\cite{Burg1,Burg3} to bound the sums
\begin{equation}
\label{burgess sum 2}
\sum_{v_1,\ldots,v_{2r}=1}^V\left|\sum_{x=1}^q\chi\(\prod_{i=1}^r(x+dv_i)\)
\overline{\chi}\(\prod_{i=r+1}^{2r}(x+dv_i)\)\right|, \quad r=2,3,
\end{equation}
which will then be used with techniques from~\cite{FGS} to obtain new bounds for sums of the form
$$\sum_{\substack{n\le N \\ (n,q)=1}}\chi(n+a).$$
\subsection{The case r=2} 
We use a special case of~\cite[Lemma~7]{Burg1}.
\begin{lemma}
\label{lem:Burgress-L7 1}
For integer $q$ let $\chi$ be a primitive character$\pmod q$ and let 
$$f_1(x)=(x-dv_1)(x-dv_2), \quad f_2(x)=(x-dv_3)(x-dv_4).$$
Suppose at least $3$ of $v_1, v_2, v_3, v_{4}$ are distinct and define
$$A_i=\prod_{j \neq i}(dv_i - dv_j).$$
Then we have
$$\left|\sum_{x=1}^{q}\chi(f_1(x))\overline\chi(f_2(x))\right|\le 8^{\omega(q)}q^{1/2}(q,A_i),$$
for some $A_i\neq 0$ with  $1\le i \le 4$,  where $\omega(q)$ counts the number of distinct prime factors of $q$.
\end{lemma}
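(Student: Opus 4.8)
The plan is to follow Burgess and reduce to prime-power moduli, applying the Weil bound at each prime. Since $\chi$ is primitive modulo $q$, write $q=\prod_{p^\alpha\|q}p^\alpha$ and factor $\chi=\prod_p\chi_p$ with each $\chi_p$ primitive modulo $p^\alpha$. Interpreting $\overline{\chi}(f_2(x))$ as $\chi$ of the inverse of $f_2(x)$ when $(f_2(x),q)=1$ and as $0$ otherwise, the Chinese Remainder Theorem gives the factorization
$$\sum_{x=1}^{q}\chi(f_1(x))\overline\chi(f_2(x))=\prod_{p^\alpha\|q}\left(\sum_{x=1}^{p^\alpha}\chi_p(f_1(x))\overline{\chi_p}(f_2(x))\right),$$
so it suffices to bound each local factor and take the product.

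First I would treat the prime case $\alpha=1$, where the summand is $\chi_p$ of the rational function $g(x)=f_1(x)/f_2(x)$, a ratio of two quadratics with total root set $\{dv_1,dv_2,dv_3,dv_4\}$. The hypothesis that at least three of the $v_i$ are distinct ensures that, modulo a good prime $p$, the function $g$ is neither constant nor a constant multiple of a perfect $\mathrm{ord}(\chi_p)$-th power; this is exactly the nondegeneracy needed for Weil's bound for multiplicative character sums, which then yields a local bound of the form $(\deg-1)p^{1/2}\le 3p^{1/2}$, the degree being governed by the number of distinct zeros and poles of $g$. The exceptional primes are those for which two of the shifted roots collide, i.e.\ $p\mid d(v_i-v_j)$ for some $i\ne j$, and these are precisely the primes dividing the products $A_i$. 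For prime powers $p^\alpha$ with $\alpha\ge2$ I would use the standard $p$-adic stationary-phase (root-lifting) argument, reducing the sum to contributions from the zeros of the logarithmic derivative of $g$; at good primes this again produces a clean factor $p^{\alpha/2}$, while at the exceptional primes the contribution is controlled by the $p$-part of the relevant difference.

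Multiplying the local estimates, the good primes contribute $\prod p^{\alpha/2}=q^{1/2}$, the per-prime Weil constants assemble into $8^{\omega(q)}$ (which comfortably dominates $(\deg-1)$ at each prime and absorbs the lifting constants), and the exceptional primes must be packaged into a single gcd factor. To achieve this I would first select, using the three-distinct hypothesis, an index $i$ whose value $v_i$ differs from all of the other $v_j$, so that $A_i=\prod_{j\ne i}d(v_i-v_j)\ne0$; this $A_i$ records, prime by prime, the collisions that cause degeneracy.

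The step I expect to be the main obstacle is precisely this last bookkeeping: showing that the product over all exceptional primes of the local degenerate contributions is bounded by a single $(q,A_i)$ for one fixed choice of $i$, rather than by an unwieldy product of gcds attached to varying colliding pairs. The difficulty is that at different primes different pairs $v_j,v_k$ may collide, and one must verify that a single index $i$ (chosen so that $A_i\ne0$) captures enough $p$-adic valuation at every exceptional prime simultaneously; handling the $\alpha\ge2$ lifting so that its degenerate part also fits inside the same $(q,A_i)$ is the technical heart of the estimate. Since the statement is a specialization of \cite[Lemma~7]{Burg1}, I would ultimately invoke that general result with $f_1,f_2$ as given to discharge this bookkeeping.
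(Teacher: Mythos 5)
Your proposal is correct and ultimately takes the same route as the paper: the paper offers no proof of this lemma at all, presenting it purely as a special case of \cite[Lemma~7]{Burg1} with shifts $a_i=dv_i$, which is precisely the citation you invoke at the end to discharge the gcd bookkeeping you correctly identify as the technical heart. Your preliminary CRT/Weil/root-lifting sketch is a fair outline of what lies inside Burgess's own argument, but it plays no logical role once the specialization of \cite[Lemma~7]{Burg1} is invoked, so the two proofs coincide in substance.
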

We use Lemma~\ref{lem:Burgress-L7 1} and the proof of~\cite[Lemma~8]{Burg1} to show,
\begin{lemma}\label{lem:Burgess-4} For any primitive character $\chi$
modulo $q$ and any positive integer $V$ we have,
$$
\sum_{v_1,\ldots,v_4=1}^V\left|\sum_{x=1}^q\chi\(\prod_{i=1}^2(x+dv_i)\)
\overline{\chi}\(\prod_{i=3}^4(x+dv_i)\)\right|
\le (V^2q+  (d,q)^3q^{1/2}V^4)q^{o(1)}.$$

\end{lemma}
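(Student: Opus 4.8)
The plan is to split the outer summation over $(v_1,v_2,v_3,v_4)$ according to how many of the four values coincide, estimating the inner complete sum trivially on the diagonal and via Lemma~\ref{lem:Burgress-L7 1} off it. First I note that, up to replacing each $v_i$ by $-v_i$, the inner sum is exactly $\sum_{x=1}^q\chi(f_1(x))\overline\chi(f_2(x))$ with $f_1,f_2$ as in Lemma~\ref{lem:Burgress-L7 1}, so the sign discrepancy between $x+dv_i$ and $x-dv_i$ is immaterial. I call a tuple \emph{degenerate} if at most two of $v_1,v_2,v_3,v_4$ are distinct. Since $|\chi|\le 1$, the inner sum over $x$ is trivially at most $q$, and the number of degenerate tuples is $O(V^2)$ (choose at most two values and assign them to the four slots). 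Hence the degenerate tuples contribute at most $V^2 q$.

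For the remaining tuples at least three of the $v_i$ are distinct, so Lemma~\ref{lem:Burgress-L7 1} applies and bounds the inner sum by $8^{\omega(q)}q^{1/2}(q,A_i)$ for some $i$ with $A_i=\prod_{j\neq i}(dv_i-dv_j)\neq 0$. Writing $g=(d,q)$ and using submultiplicativity of the gcd, namely $(q,mn)\le (q,m)(q,n)$, I bound
$$(q,A_i)\le (q,d^3)\prod_{j\neq i}(q,v_i-v_j)\le g^3\prod_{j\neq i}(q,v_i-v_j).$$
Since the relevant index $i$ depends on the tuple, I pass to an upper bound by summing over all four choices of $i$; by symmetry it suffices to treat $i=1$, where $A_1\neq 0$ forces $v_1\neq v_2,v_3,v_4$. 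This reduces matters to estimating
$$\sum_{v_1=1}^V\ \prod_{j=2}^{4}\Biggl(\sum_{\substack{v_j=1\\ v_j\neq v_1}}^{V}(q,v_1-v_j)\Biggr).$$

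The main technical point is the gcd sum $\sum_{1\le w\le V}(q,w)\le V\,q^{o(1)}$, which follows from writing $(q,w)=\sum_{e\mid (q,w)}\phi(e)$, interchanging summation, and using $\sum_{e\mid q}\phi(e)/e\le d(q)=q^{o(1)}$; the same bound applies to the symmetric range of nonzero $w$ with $|w|<V$. Feeding this into the display above, each inner sum over $v_j$ is $O(Vq^{o(1)})$, the product of the three is $O(V^3 q^{o(1)})$, and summing over $v_1$ gives $O(V^4 q^{o(1)})$. Multiplying by $g^3 q^{1/2}$ and absorbing the factor $8^{\omega(q)}=q^{o(1)}$, the nondegenerate tuples contribute at most $(d,q)^3 q^{1/2}V^4 q^{o(1)}$. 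Adding the two contributions yields the claimed bound $(V^2q+(d,q)^3q^{1/2}V^4)q^{o(1)}$.

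I expect the only genuine obstacle to be bookkeeping rather than a deep new idea, since the substantive cancellation is already supplied by Lemma~\ref{lem:Burgress-L7 1}: one must handle the tuple-dependent index $i$ cleanly (resolved here by the symmetric overcount over the four choices of $i$) and confirm that the gcd and divisor sums genuinely save the full factor $V^4$ and absorb $(q,d^3)$ into $(d,q)^3$, without leaking additional powers of $q$ beyond the $q^{o(1)}$ budget.
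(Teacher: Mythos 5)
Your proof is correct, and its skeleton matches the paper's exactly: the same split into tuples with at most two versus at least three distinct values, the trivial bound $qV^2$ for the former, Lemma~\ref{lem:Burgress-L7 1} for the latter with the same device of overcounting the unknown index via $\sum_{i:\,A_i\neq 0}(q,A_i)$, and the same factoring $A_i=d^3A_i'$ with $(q,d^3)\le (d,q)^3$. The one genuine divergence is the last step: the paper simply cites \cite[Lemma~8]{Burg1} for the bound $V^4q^{o(1)}$ on the restricted sum of $\sum_i (A_i',q)$, whereas you prove it from scratch --- via the further submultiplicative splitting $(q,\prod_{j\neq i}(v_i-v_j))\le \prod_{j\neq i}(q,v_i-v_j)$, the factorization of the sum over $(v_2,v_3,v_4)$ into a product of three one-variable sums (valid, since each gcd factor involves a single $v_j$), and the elementary estimate $\sum_{0<|w|<V}(q,w)\le Vq^{o(1)}$ obtained from $(q,w)=\sum_{e\mid (q,w)}\phi(e)$ and the divisor bound. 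This cruder splitting loses nothing, since $V^4q^{o(1)}$ is all that is needed, and it buys self-containedness: your argument does not rest on Burgess's Lemma~8. You also dispose explicitly of the sign discrepancy between $(x+dv_i)$ in the statement and $(x-dv_i)$ in Lemma~\ref{lem:Burgress-L7 1} (the substitution $x\mapsto -x$, equivalently $v_i\mapsto -v_i$, leaves both the complete sum and the gcds $(q,A_i)$ unchanged), a point the paper passes over in silence.
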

\begin{proof}
We divide the outer summation of  
$$\sum_{v_1,v_2,v_3,v_4=1}^V\left|\sum_{x=1}^q\chi\(\prod_{i=1}^2(x+dv_i)\)
\overline{\chi}\(\prod_{i=3}^4(x+dv_i)\)\right|,$$ into two sets. In the first set we put all $v_1, v_2, v_3, v_4$ which contain at most $2$ distinct numbers and we put the remaining $v_1,v_2,v_3,v_4$ into the second set. The number of elements in the first set is $\ll V^2$ and for these sets we estimate the inner sum trivially. This gives
\begin{align*}
\sum_{v_1,\ldots,v_4=1}^V\left|\sum_{x=1}^q\chi\(\prod_{i=1}^2(x+dv_i)\)
\overline{\chi}\(\prod_{i=3}^4(x+dv_i)\)\right| \ll \\ qV^2+  \sideset{}{'}\sum_{v_1,\ldots,v_4=1}^V\left|\sum_{x=1}^q\chi\(\prod_{i=1}^2(x+dv_i)\)
\overline{\chi}\(\prod_{i=3}^4(x+dv_i)\)\right|,  
\end{align*}
where the last sum is restricted to $v_1, v_2, v_3, v_4$ which contain at least $3$ distinct numbers. With notation as in Lemma~\ref{lem:Burgress-L7 1}, we have
$$\sideset{}{'}\sum_{v_1,\ldots,v_4=1}^V\left|\sum_{x=1}^q\chi\(f_1(x)\)
\overline{\chi}\(f_2(x)\)\right|\le q^{1/2+o(1)}\sideset{}{'}\sum_{v_1,\ldots,v_4=1}^V\sum_{\substack{ i=1 \\ A_i \neq 0}}^{4}(A_i,q).$$
Since $A_i=\prod_{i\neq j}(dv_i-dv_j)=d^3\prod_{i\neq j}(v_i-v_j)=d^3A'_{i},$ we have
$$\sideset{}{'}\sum_{v_1,\ldots,v_4=1}^V\sum_{\substack{ i=1 \\ A_i \neq 0}}^{4}(A_i,q)\le 
(d^3,q)\sideset{}{'}\sum_{v_1,\ldots,v_4=1}^V\sum_{\substack{ i=1 \\ A_i \neq 0}}^{4}(A'_i,q),$$
and in~\cite[Lemma~8]{Burg1} it is shown  
$$\sideset{}{'}\sum_{v_1,\ldots,v_4=1}^V\sum_{\substack{ i=1 \\ A_i \neq 0}}^{4}(A'_i,q)\le V^{4}q^{o(1)},$$
from which the result follows.
\end{proof}
Using Lemma~\ref{lem:Burgess-4} in the proof of~\cite[Lemma 10]{FGS} we get,
\begin{lemma}
\label{burgess 2}
For any primitive character $\chi \pmod q$ and integers $M$, $N$, $a$ and $d$ satisfying
$$N\le q^{5/8}d^{-5/4}, \quad d\le q^{1/6}, \quad (a,q)=1,$$
we have
$$\left|\sum_{M<n\le M+N}\chi(dn+a)\right|\le q^{3/16+o(1)}d^{3/8}N^{1/2}. $$
\end{lemma}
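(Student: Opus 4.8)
The plan is to follow the standard Burgess amplification scheme, using Lemma~\ref{lem:Burgess-4} as the input mean-value bound in place of the weaker estimate available in~\cite{FGS}. First I would introduce a shift parameter: for a nonnegative integer $h \le H$ (where $H$ is to be chosen) and any $t$ in a suitable range, the sum $\sum_{M<n\le M+N}\chi(dn+a)$ differs from $\sum_{M<n\le M+N}\chi(d(n+ht)+a)$ by an error of size $O(Hd/q^{0})$ coming from the $\le 2H$ boundary terms where the shifted and unshifted intervals disagree. Averaging over $h \le H$ and over $t$ in an interval of length $T$ lets me replace the single sum by a double average, at the cost of a main error term of order $HT$ (the boundary defect) which I will need to keep under control in the final optimisation.

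Next I would apply the Cauchy--Schwarz inequality and then raise to the $2r$-th power with $r=2$ to set up the mean value~\eqref{burgess sum 2}. After interchanging the order of summation and completing the inner sum over the residue $n \pmod q$, the average over the shifts $ht$ gets recast, via a change of variables, into the sum over $v_1,\dots,v_4$ appearing in Lemma~\ref{lem:Burgess-4}: the key point is that each $\chi(dn+a)$ becomes a translate $\chi(x+dv_i)$, so the completed fourth-moment sum is exactly $\sum_{v_1,\dots,v_4=1}^{V}\left|\sum_{x=1}^q \chi\(\prod_{i=1}^2(x+dv_i)\)\overline{\chi}\(\prod_{i=3}^4(x+dv_i)\)\right|$ with $V \asymp H$. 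Here I invoke Lemma~\ref{lem:Burgess-4} to bound this by $(V^2 q + (d,q)^3 q^{1/2} V^4)q^{o(1)}$, which is precisely where the improved $d$-dependence of the new mean value enters.

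Assembling the pieces, the fourth power of the original sum is bounded by a combination of the diagonal-type term $V^2 q$ and the off-diagonal term $(d,q)^3 q^{1/2} V^4$, divided by the appropriate powers of $H$ and $T$ coming from the averaging, plus the boundary error $HT$ raised to the fourth power. I would then choose the parameters $H$, $T$ and $V$ to balance these competing terms. The constraints $N \le q^{5/8}d^{-5/4}$ and $d \le q^{1/6}$ in the statement are exactly what guarantee that the optimal choice is admissible (in particular that $HT \le N$ so the shifting is legal, and that the off-diagonal term does not dominate), and a routine optimisation should yield the exponents $q^{3/16}d^{3/8}N^{1/2}$ in the final bound.

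The main obstacle I anticipate is the bookkeeping in the optimisation rather than any single hard estimate: one must track the factor $(d,q)^3$ through the fourth root and verify that replacing it by the crude bound $(d,q) \le d$ still leaves the stated exponent $d^{3/8}$ after dividing by the averaging factors. The delicate step is ensuring the two constraints on $N$ and $d$ are precisely those forced by demanding both that the shifting error $HT$ is negligible compared to the Burgess main term and that $V$ stays within the regime where the off-diagonal contribution $(d,q)^3 q^{1/2}V^4$ remains smaller than the diagonal contribution $V^2 q$; getting these inequalities to line up with the claimed range is where the care lies, and it is essentially a transcription of the argument of~\cite[Lemma~10]{FGS} with the new mean value substituted in.
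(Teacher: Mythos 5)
Your skeleton (shifts of the form $uv$, H\"older's inequality, completion of the inner sum, Lemma~\ref{lem:Burgess-4} as the mean-value input, then optimisation of parameters) is the right one and matches the paper's proof, which indeed transcribes \cite[Lemma~10]{FGS} with the new mean value. But your treatment of the shifting error is a genuine gap, and it is fatal. You propose to bound the discrepancy between the original and shifted sums trivially, by the $O(HT)$ boundary terms, and to ``keep it under control in the final optimisation.'' This cannot work: after H\"older and Lemma~\ref{lem:Burgess-4} the main term is of order $N^{3/4}q^{1/4}H^{-1/4}T^{-1/2}$ (with $T\le d^{-3/2}q^{1/4}$ forced so that the diagonal term $T^2q$ dominates the fourth moment), and making this $\le q^{3/16}d^{3/8}N^{1/2}$ forces $HT\gg N$ --- the paper takes $U=[0.25Nd^{3/2}q^{-1/4}]$, $V=[0.25d^{-3/2}q^{1/4}]$, so $UV\asymp N$. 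A boundary error of size $O(HT)=O(N)$ is then worse than the trivial bound for the whole sum. If instead you shrink $H,T$, the best you can do is balance $HT$ against $N^{3/4}q^{1/4}H^{-1/4}T^{-1/2}$, which (taking $T$ maximal) yields roughly $N^{3/5}q^{3/20}d^{3/10}$; this exceeds $q^{3/16}d^{3/8}N^{1/2}$ exactly when $N\ge q^{3/8}d^{3/4}$, i.e.\ precisely throughout the range where the stated bound is nontrivial. So the trivial boundary estimate can never deliver the lemma.

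The missing idea is Burgess's induction on $N$. The paper proves the lemma by induction, with base case $N\le q^{3/8}$ (where the claim is trivial), and bounds the two boundary sums --- which are sums of exactly the same shape $\chi(dn+a)$ over intervals of length $\le h<N$ --- by the inductive hypothesis, giving an error $2q^{3/16+\varepsilon}d^{3/8}h^{1/2}$ rather than $O(h)$. The numerical factors $0.25$ in $U,V$ are chosen so that $2q^{3/16+\varepsilon}d^{3/8}(UV)^{1/2}\le\tfrac12 q^{3/16+\varepsilon}d^{3/8}N^{1/2}$, which is what allows the induction to close. Two smaller corrections to your account of the hypotheses: the condition $N\le q^{5/8}d^{-5/4}$ is used at the end to absorb the term $dNq^{-1/8}$, which arises from the second-moment bound $\sum_x\nu^2(x)\le(dNU/q+1)NUq^{o(1)}$ on the representation function, into the main term, and $d\le q^{1/6}$ is what keeps $V\ge 1$; neither has anything to do with making a trivial boundary error negligible, and the diagonal/off-diagonal comparison in Lemma~\ref{lem:Burgess-4} is handled automatically by the choice of $V$, not by these hypotheses.
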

\begin{proof}
We proceed by induction on $N$. Since the result is trivial for $N\le q^{3/8}$, this forms the basis of the induction. We define 
$$U=[0.25Nd^{3/2}q^{-1/4}], \quad V=[0.25d^{-3/2}q^{1/4}],$$
and let
$$\cU=\{ \ 1\le u \le U \ : \ (u,dq)=1 \  \}, \quad  \cV= \{ \ 1\le v \le V \ : \ (v,q)=1 \  \}. $$
By the inductive assumption, for any $\varepsilon>0$  and integer $h\le UV< N$ we have
$$\left|\sum_{M<n\le M+N}\chi(dn+a)\right|\le \left|\sum_{M<n\le M+N}\chi(d(n+h)+a)\right|+ 2q^{3/16+\varepsilon}d^{3/8}h^{1/2},$$
for sufficiently large $q$. Hence
$$\left|\sum_{M<n\le M+N}\chi(dn+a)\right|\le \frac{1}{\#\cU \# \cV}|W|+ 2q^{3/16+\varepsilon}d^{3/8}(UV)^{1/2},$$
where
$$W=\sum_{u\in \cU}\sum_{v \in \cV}\sum_{M<n\le M+N}\chi(d(n+uv)+a)=
\sum_{u\in \cU}\chi(u)\sum_{M<n\le M+N}\sum_{v \in \cV}\chi((dn+a)u^{-1}+dv).
$$
We have
$$|W|\le \sum_{x=1}^{q}\nu(x)\left|\sum_{v \in \cV}\chi(x+dv)\right|,$$
where $\nu(x)$ is the number of representations $x \equiv (dn+a)u^{-1} \pmod q$ with $M<n \le M+N$ and $u \in \cU$.
Two applications of  H\"{o}lder's inequality gives
$$|W|^4\le \left(\sum_{x=1}^{q}\nu^2(x)\right)\left(\sum_{x=1}^{q}\nu(x)\right)^2\sum_{x=1}^{q}\left|\sum_{v \in \cV}\chi(x+dv)\right|^4.$$
From the proof of~\cite[Lemma~7]{FGS} we have
$$\sum_{x=1}^{q}\nu(x)=N\# \cU, \quad  \sum_{x=1}^{q}\nu^2(x)\le \left(\frac{dNU}{q}+1\right)NUq^{o(1)},$$
and by Lemma~\ref{lem:Burgess-4}
\begin{align*}
\sum_{x=1}^{q}\left|\sum_{v \in \cV}\chi(x+dv)\right|^4&=\sum_{v_1,\dots v_4 \in \cV}\sum_{x=1}^{q}
\chi \left(\prod_{i=1}^{2}(x+dv_i)\right)\overline \chi \left(\prod_{i=3}^{4}(x+dv_i) \right) \\
&\le \sum_{v_1,\dots v_4 =1}^{V} \left| \sum_{x=1}^{q}
\chi \left(\prod_{i=1}^{2}(x+dv_i)\right)\overline \chi \left(\prod_{i=3}^{4}(x+dv_i) \right)\right| \\
&\ll V^2q^{1+o(1)},
\end{align*}
 since $V\le d^{-3/2}q^{1/4}$. Combining the above bounds gives
$$|W|^4\le \left(\frac{dNU}{q}+1\right)NU(N\# \cU)^2V^2q^{1+o(1)},$$
and since 
$$\#\cU=Uq^{o(1)}, \quad \#\cV=Vq^{o(1)},$$
we have
\begin{align*}
\left|\sum_{M<n\le M+N}\chi(dn+a)\right| \le 
\left(\frac{d^{1/4}N}{V^{1/2}}+\frac{q^{1/4}N^{3/4}}{U^{1/4}V^{1/2}}\right)q^{o(1)}+2q^{3/16+\varepsilon}d^{3/8}(UV)^{1/2}.
\end{align*}
Recalling the choice of $U$ and $V$ we get
\begin{align*}
\left|\sum_{M<n\le M+N}\chi(dn+a)\right| &\le \left(\frac{dN}{q^{1/8}}+q^{3/16}d^{3/8}N^{1/2}\right)q^{o(1)}+\frac{1}{2}q^{3/16+\varepsilon}d^{3/8}N^{1/2}, \\
\end{align*}
and since by assumption, 
$$N\le q^{5/8}d^{-5/4},$$
we get for sufficiently large $q$
\begin{align*}
\left|\sum_{M<n\le M+N}\chi(dn+a)\right| &\le q^{3/16}d^{3/8}N^{1/2}q^{o(1)}+\frac{1}{2}q^{3/16+\varepsilon}d^{3/8}N^{1/2} \\
 &\le q^{3/16+\varepsilon}d^{3/8}N^{1/2}.
\end{align*}
\end{proof}
\begin{lemma}
\label{burgess 2 1}
Let $\chi$ be a primitive character$\pmod q$ and suppose $(a,q)=1$, then for $N\le q^{43/72}$ we have
$$\left|\sum_{\substack{M<n\le M+N \\ (n,q)=1}}\chi(n+a)\right|\le q^{3/16+o(1)}N^{1/2}.$$
\end{lemma}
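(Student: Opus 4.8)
The plan is to remove the coprimality restriction $(n,q)=1$ by Möbius detection and thereby reduce the sum to the bound already available in Lemma~\ref{burgess 2}. Writing the indicator of $(n,q)=1$ as $\sum_{d\mid(n,q)}\mu(d)$ and substituting $n=dm$, I obtain
$$\sum_{\substack{M<n\le M+N \\ (n,q)=1}}\chi(n+a)=\sum_{d\mid q}\mu(d)\sum_{M/d<m\le (M+N)/d}\chi(dm+a),$$
so the task becomes the estimation, for each squarefree divisor $d$ of $q$, of an incomplete character sum of length $N/d$ with linear argument $dm+a$. Since $q$ has only $q^{o(1)}$ divisors, I can estimate each inner sum separately and recombine with only a $q^{o(1)}$ loss.

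I would then split the divisors at a threshold $D$, to be chosen. For the small divisors $d\le D$ I apply Lemma~\ref{burgess 2} with $N$ replaced by $N/d$, which gives
$$\left|\sum_{M/d<m\le (M+N)/d}\chi(dm+a)\right|\le q^{3/16+o(1)}d^{3/8}(N/d)^{1/2}=q^{3/16+o(1)}N^{1/2}d^{-1/8};$$
summing over $d\mid q$ costs only $\sum_{d\mid q}d^{-1/8}\le q^{o(1)}$ and so yields the desired $q^{3/16+o(1)}N^{1/2}$. For the large divisors $d>D$ the inner sum has length at most $N/D$, so the trivial bound $N/d+1$ suffices, and summing over the $q^{o(1)}$ divisors exceeding $D$ contributes $(N/D+1)q^{o(1)}$.

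The delicate point, and the reason the range $N\le q^{43/72}$ appears, is the choice of $D$: it must be large enough that the trivial tail is controlled, namely $N/D\le q^{3/16}N^{1/2}$, i.e. $D\ge N^{1/2}q^{-3/16}$, yet small enough that Lemma~\ref{burgess 2} is applicable for every $d\le D$, namely $d\le q^{1/6}$ and $N\le q^{5/8}d^{-1/4}$ (the binding case being $d=D$, since $q^{5/8}d^{-1/4}$ decreases in $d$). Taking $D=N^{1/2}q^{-3/16}$, I would check that $D\le q^{1/9}\le q^{1/6}$ whenever $N\le q^{43/72}$, and that the constraint $N\le q^{5/8}D^{-1/4}$ rearranges to $N^{9/2}\le q^{43/16}$, that is, exactly to $N\le q^{43/72}$. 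Hence the stated range is precisely what makes $D=N^{1/2}q^{-3/16}$ simultaneously admissible for Lemma~\ref{burgess 2} and small enough to handle the trivial tail; balancing these two competing requirements is the only real obstacle, the remainder being the routine bookkeeping of the divisor sums (and the observation that for $N\le q^{3/8}$ the claimed bound exceeds $N$ and so holds trivially).
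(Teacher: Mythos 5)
Your proposal is correct and follows essentially the same route as the paper: M\"obius detection over divisors $d\mid q$, a split at the threshold $D=N^{1/2}q^{-3/16}$ (the paper's $Z$, which it misprints with $q^{-3/16}$ in the denominator), Lemma~\ref{burgess 2} applied to the sums of length $N/d$ for $d\le D$, the trivial bound for $d>D$, and the verification that the admissibility constraints $D\le q^{1/6}$ and $N\le q^{5/8}D^{-1/4}$ hold exactly when $N\le q^{43/72}$. Your accounting of where the exponent $43/72$ comes from matches the paper's concluding check precisely.
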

\begin{proof}
We have
\begin{align*}
\left|\sum_{\substack{M<n\le M+N \\ (n,q)=1}}\chi(n+a)\right|&=\left|\sum_{d|q}\mu(d)\sum_{M/d<n\le (M+N)/d}\chi(dn+a)\right| \\
&\le \sum_{d|q}\left| \sum_{M/d<n\le (M+N)/d}\chi(dn+a) \right|.
\end{align*}
Let $$Z=\left \lfloor \frac{N^{1/2}}{q^{-3/16}} \right \rfloor,$$
then by Lemma~\ref{burgess 2} we have
\begin{align*}
& \sum_{\substack{d|q \\ d\le Z}}\left| \sum_{M/d<n\le (M+N)/d}\chi(dn+a) \right|= \\ & \quad \quad \quad  \sum_{\substack{d|q\\ d\le Z}}\left| \sum_{M/d<n\le (M+N)/d}\chi(dn+a) \right|+ \sum_{\substack{d|q \\ d>Z}}\left| \sum_{M/d<n\le (M+N)/d}\chi(dn+a) \right| \\
& \quad \quad \quad \quad \quad \quad \quad \quad   \le  \sum_{\substack{d|q\\ d\le Z}}q^{3/16+o(1)}d^{-1/8}N^{1/2}+ \sum_{\substack{d|q\\ d> Z}}\frac{N}{d}.
\end{align*}
By choice of $Z$ we get
\begin{align*}
 \sum_{\substack{d|q\\ d\le Z}}q^{3/16+o(1)}d^{-1/8}N^{1/2}+ \sum_{\substack{d|q\\ d> Z}}\frac{N}{d}&\le \left(q^{3/16}N^{1/2}+\frac{N}{Z} \right)q^{o(1)}
\le q^{3/16+o(1)}N^{1/2},
\end{align*}
which gives the desired bound. It remains to check that the conditions of Lemma~\ref{burgess 2} are satisfied. For each $d|q$ with $d\le Z$, we need
$$\frac{N}{d}\le q^{5/8}d^{-5/4}, \quad d\le q^{1/6}.$$
Recalling the choice of $Z$, we see this is satisfied for $N\le q^{43/72}.$
\end{proof}
\subsection{The case r=3}
Throughout this section we let
\begin{equation}
\label{f definitions}
f_1(x)=(x+dv_1)(x+dv_2)(x+dv_3), \quad f_1(x)=(x+dv_4)(x+dv_5)(x+dv_6),
\end{equation}
and
\begin{equation}
\label{F definitions}
F(x)=f_1'(x)f_2(x)-f_1(x)f_2'(x),
\end{equation}
and write $\mathbf{v}=(v_1,\dots v_6)$. We follow the argument of Burgess~\cite{Burg3} to give an  upper bound for the cardinality of the set
\begin{align*}
\cA(s,s')=\{ & \mathbf{v} \  : \ 0<v_i \le V, \ \text{there exists an $x$ such that} \\ & \ (s,f_1(x)f_2(x))=1, \
s|F(x), \ s|F'(x), \ s'|F''(x) \},
\end{align*}
which will then be combined with the proof of~\cite[Theorem~2]{Burg3} to bound the sums~\eqref{burgess sum 2}.
The proof of the following Lemma is the same as~\cite[Lemma 3]{Burg3}.
\begin{lemma}
\label{A}
Let $s'|s$ and consider the equations
\begin{equation}
\label{cond 1.1}
(\lambda, s)=1, \quad (f_1(-t), s/s')=1,
\end{equation}
\begin{equation}
\label{cond 1.2}
6(f_1(X)+\lambda f_2(X)) \equiv 6(1+\lambda)(X+t)^3 \pmod s,
\end{equation}
\begin{equation}
\label{cond 1.3}
6(1+\lambda) \equiv 0 \pmod {s'}.
\end{equation}
Let
\begin{align*}
\cA_1(s,s')=\{ \mathbf{v},\ & \lambda, \ t \ : \ 0<v_i\le V, \  v_i \neq v_1, \  i \ge 2,  \\  & \ 0<\lambda \le s, \  0<t \le s/s', \ \eqref{cond 1.1},\ \eqref{cond 1.2}, \ \eqref{cond 1.3} \},
\end{align*}
then we have
$$\# \cA(s,s') \ll V^3+\# \cA_1(s,s').$$
\end{lemma}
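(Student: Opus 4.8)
The plan is to attach to each admissible $\mathbf v$ a pair $(\lambda,t)$ so that the triple $(\mathbf v,\lambda,t)$ lies in $\cA_1(s,s')$; since the construction keeps $\mathbf v$ fixed, distinct $\mathbf v$ yield distinct triples, and the projection of $\cA_1(s,s')$ onto the $\mathbf v$-coordinate already covers every $\mathbf v$ for which the construction succeeds. The configurations for which it degenerates form an exceptional set of size $\ll V^3$, and together these two facts give $\#\cA(s,s')\ll V^3+\#\cA_1(s,s')$.

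So I fix $\mathbf v\in\cA(s,s')$ and an $x$ with $(s,f_1(x)f_2(x))=1$, $s\mid F(x)$, $s\mid F'(x)$ and $s'\mid F''(x)$, where $f_1,f_2$ and $F$ are as in \eqref{f definitions} and \eqref{F definitions}. I put $t\equiv -x$ and $\lambda\equiv -f_1(x)/f_2(x)\pmod s$. Since $f_2(x)$ is a unit modulo $s$, $\lambda$ is well defined and is itself a unit, so $(\lambda,s)=1$; moreover $f_1(-t)\equiv f_1(x)$ is a unit modulo $s$ and hence modulo $s/s'$, which gives \eqref{cond 1.1}. Writing $P=f_1+\lambda f_2$, a direct check gives the Wronskian identities $F=P'f_2-Pf_2'$ and $F'=P''f_2-Pf_2''$. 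The choice of $\lambda$ forces $P(x)\equiv 0$, whence $P'(x)\equiv F(x)/f_2(x)\equiv 0$ and $P''(x)\equiv F'(x)/f_2(x)\equiv 0\pmod s$.

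Now I expand $P$ about $x$ by Taylor's formula. As $P$ is a cubic with leading coefficient $1+\lambda$, one has the exact polynomial identity with integer coefficients $6P(X)=6P(x)+6P'(x)(X-x)+3P''(x)(X-x)^2+6(1+\lambda)(X-x)^3$, the factor $6$ being exactly what clears the denominators $2$ and $6$ of Taylor's formula; reducing modulo $s$ and using $P(x)\equiv P'(x)\equiv P''(x)\equiv 0$ yields \eqref{cond 1.2}. For \eqref{cond 1.3}, differentiating $F'=P''f_2-Pf_2''$ once more gives $F''(x)=P'''(x)f_2(x)+P''(x)f_2'(x)-P'(x)f_2''(x)-P(x)f_2'''(x)\equiv 6(1+\lambda)f_2(x)\pmod s$, since $P'''(x)=6(1+\lambda)$ and the other three terms vanish modulo $s$. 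The hypothesis $s'\mid F''(x)$, combined with the invertibility of $f_2(x)$ modulo $s'$, then forces $6(1+\lambda)\equiv 0\pmod{s'}$, which is \eqref{cond 1.3}.

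Finally, because $6(1+\lambda)\equiv 0\pmod{s'}$, changing $t$ by a multiple of $s/s'$ alters the right-hand side of \eqref{cond 1.2} by a multiple of $s'\cdot(s/s')=s$, so \eqref{cond 1.2} depends only on $t$ modulo $s/s'$; I may therefore take the representative $0<t\le s/s'$ demanded in $\cA_1(s,s')$. The construction fails only in degenerate situations, principally when $F\equiv 0\pmod s$, that is $f_1\equiv f_2$; such $\mathbf v$ are constrained to at most three free parameters and so number $\ll V^3$, which (after the normalisation isolating the distinguished value $v_1$ as in \cite[Lemma~3]{Burg3}) is the source of that term. The main obstacle is precisely this bookkeeping over the composite modulus $s$: a triple zero of $P$ modulo a composite number must be handled through the derivative conditions rather than through factorisation, the constant $6$ has to be carried throughout (it is why \eqref{cond 1.2} and \eqref{cond 1.3} are stated with that factor), and one must verify that sharpening the range of $t$ from $s$ down to $s/s'$ loses nothing — it is this sharper range that makes $\cA_1(s,s')$ small enough to be useful later.
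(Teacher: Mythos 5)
Your construction of $(\lambda,t)$ --- setting $\lambda\equiv -f_1(x)/f_2(x)\pmod s$, $t\equiv -x$, showing via the Wronskian identities $F=P'f_2-Pf_2'$ and $F'=P''f_2-Pf_2''$ that $P=f_1+\lambda f_2$ has a triple zero at $x$ modulo $s$, then using the integral Taylor expansion and the reduction of $t$ modulo $s/s'$ --- is exactly the analytic core of the argument the paper intends (the paper gives no proof of its own, deferring to \cite[Lemma~3]{Burg3}), and all of those verifications are correct. The genuine gap is in the combinatorial bookkeeping. Membership in $\cA_1(s,s')$ requires $v_i\neq v_1$ for every $i\ge 2$, and your construction does nothing to secure this: for a tuple such as $(a,a,b,c,d,e)$ with $f_1\neq f_2$ the construction succeeds, but the resulting triple lies outside $\cA_1$ because $v_2=v_1$. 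Such tuples are covered by neither of your two cases, and there are $\asymp V^5$ of them, so as written your argument proves at best $\#\cA\ll V^5+\#\cA_1$. You have also misidentified the exceptional set: when $f_1\equiv f_2$ the construction does not fail at all --- one gets $\lambda\equiv-1\pmod s$, so that \eqref{cond 1.1}, \eqref{cond 1.2}, \eqref{cond 1.3} hold trivially --- hence the case $F\equiv 0$ is not where the $V^3$ term comes from.

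The missing idea is a symmetry and relabeling step. The conditions defining $\cA(s,s')$ are invariant under permuting $v_1,v_2,v_3$, permuting $v_4,v_5,v_6$, and swapping $f_1\leftrightarrow f_2$ (which sends $F\mapsto -F$ and therefore preserves $s\mid F(x)$, $s\mid F'(x)$, $s'\mid F''(x)$, as well as $(s,f_1(x)f_2(x))=1$). Consequently, whenever some value occurs exactly once among $v_1,\dots,v_6$, it can be moved into position $1$, producing a tuple satisfying the defining conditions of $\cA$ with $v_1\neq v_i$ for $i\ge 2$, to which your construction then applies; this relabeling is bounded-to-one (the symmetry group has order $72$), so all such tuples contribute $\ll\#\cA_1(s,s')$. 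The truly exceptional tuples are those in which every value occurs at least twice: they involve at most three distinct values and hence number $\ll V^3$. This set --- of which $f_1=f_2$ is only a proper subset, since for instance $(a,a,b,c,c,b)$ has all multiplicities at least $2$ but $f_1\neq f_2$ --- is the actual source of the $V^3$ term. With this step inserted, your proof is complete and coincides with Burgess's.
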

We next make the substitutions
\begin{align}
\label{substitutions}
Y&=X+dv_1, \nonumber \\
V_i&=v_i-v_1, \quad i\ge 2, \\ 
T&= t-dv_1 \pmod {s/s'} , \nonumber
\end{align}
so that 
\begin{align}
\label{f sub 1}
f_1(X)&=Y(Y+dV_2)(Y+dV_3)=Y^3+d(V_2+V_3)Y^2+d^2V_2V_3Y \nonumber \\ &=g_1(Y),
\end{align}
\begin{align}
\label{f sub 2}
f_2(X)&=(Y+dV_4)(Y+dV_5)(Y+dV_6)=Y^3+d\sigma_1 Y^2+d^2\sigma_2Y+d^3\sigma_3 \nonumber \\&= g_2(Y),
\end{align}
where
\begin{align}
\label{sigma}
\sigma_1&=V_4+V_5+V_6, \nonumber \\
\sigma_2&=V_4V_5+V_4V_6+V_5V_6, \\
\sigma_3&=V_4V_5V_6, \nonumber
\end{align}
and we see that~\eqref{cond 1.2} becomes
\begin{equation}
\label{cond 1.2.1}
6(g_1(Y)+\lambda g_2(Y)) \equiv 6(1+\lambda)(Y+T)^3 \pmod s.
\end{equation}
The proof of the following Lemma follows that of~\cite[Lemma~4]{Burg3}. 
\begin{lemma}
\label{A1}
With notation as in~\eqref{substitutions} and~\eqref{sigma}, consider the equations
\begin{equation}
\label{cond 2.1}
(s/s',T)=1, \quad (s/s',T-dM_3)=1,
\end{equation}
\begin{equation}
\label{cond 2.2}
6d^2T^3(V_3^2-\sigma_1V_3+\sigma_2)-18d^3\sigma_3T^2+18d^4V_3\sigma_3T-6d^5V_3^2\sigma_3 \equiv 0 \pmod s,
\end{equation}
\begin{equation}
\label{cond 2.3}
6d^3\sigma_3 \equiv 0 \pmod {s'},
\end{equation}
and let
\begin{align*}
\cA_2(s,s')=\{ (V_3, & \ V_4, \ V_5, \ V_6, \ T) \ : \\ & \ 0<|M_i|\le V, \  0<T\le s/s', \ \eqref{cond 2.1},\ \eqref{cond 2.2}, \ \eqref{cond 2.3} \}.
\end{align*}
Then we have
$$ \# \cA_1(s,s')\ll (d,s)V(1+V/q) \# \cA_2(s,s'). $$
\end{lemma}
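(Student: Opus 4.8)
The plan is to exploit the substitutions \eqref{substitutions} to convert the single-point condition \eqref{cond 1.2} into the polynomial congruence \eqref{cond 1.2.1} and then to compare coefficients in the variable $Y$. Expanding $g_1(Y)+\lambda g_2(Y)-(1+\lambda)(Y+T)^3$ by means of \eqref{f sub 1}, \eqref{f sub 2} and \eqref{sigma}, the coefficient of $Y^3$ vanishes identically, while the coefficients of $Y^2$, $Y^1$ and $Y^0$ yield three congruences modulo $s$ which tie the auxiliary quantities $V_2$ and $1+\lambda$ to the fixed data $V_3,\sigma_1,\sigma_2,\sigma_3,T,d$. These three congruences, together with \eqref{cond 1.1} and \eqref{cond 1.3}, carry the full content of membership in $\cA_1(s,s')$ after substitution, and the first task is to show that they force the projected tuple $(V_3,V_4,V_5,V_6,T)$ into $\cA_2(s,s')$.

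I would first dispatch the ``easy'' conditions. Condition \eqref{cond 2.3} follows on reducing the $Y^0$-congruence modulo $s'$ and invoking \eqref{cond 1.3}, which annihilates the factor $6(1+\lambda)$ and leaves $6d^3\sigma_3\equiv 0\pmod{s'}$. Condition \eqref{cond 2.1} comes from the coprimality requirement in \eqref{cond 1.1}: after substitution $f_1(-t)\equiv(-T)(dV_2-T)(dV_3-T)\pmod{s/s'}$, whose first and third factors give exactly $(s/s',T)=1$ and $(s/s',T-dV_3)=1$, the middle factor involving the eliminated variable $V_2$ being reserved for the counting step.

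The heart of the matter is \eqref{cond 2.2}, which I would obtain by eliminating $V_2$ and $1+\lambda$ from the three coefficient congruences. Combining the $Y^2$- and $Y^1$-congruences removes $V_2$ and produces a single relation of the shape $6(1+\lambda)\bigl(-3T^2+3dV_3T-d^2(V_3\sigma_1-\sigma_2)\bigr)\equiv 6d^2(V_3^2-\sigma_1V_3+\sigma_2)\pmod s$; pairing this with the rearranged $Y^0$-congruence $6(1+\lambda)(d^3\sigma_3-T^3)\equiv 6d^3\sigma_3\pmod s$ and cross-multiplying to cancel $1+\lambda$ yields \eqref{cond 2.2}, once the identity $V_3\sigma_1-\sigma_2+(V_3^2-\sigma_1V_3+\sigma_2)=V_3^2$ collapses the surviving terms. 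The delicate point, and the main obstacle, is the cancellation of the leading $d^6\sigma_3(\ldots)$ terms in this cross-multiplication, which must be tracked with care to land on precisely \eqref{cond 2.2}.

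Finally, for the counting I would bound the fibre of the projection $\cA_1(s,s')\to\cA_2(s,s')$. For fixed $(V_3,V_4,V_5,V_6,T)$, the variable $v_1$ is unconstrained by the $\cA_2$-conditions and contributes at most $V$ choices; given $v_1$, the $Y^0$-congruence confines $1+\lambda$ to a controlled set of residues modulo $s$, and the $Y^2$-congruence, being linear in $V_2$ with leading coefficient a multiple of $d$, restricts $V_2$ to a single residue class, whose representatives in an interval of length $\le 2V$ number $\ll 1+V/q$. Collecting the powers of $(d,s)$ that arise on dividing these linear congruences through by their $d$-laden leading coefficients produces the claimed bound $\#\cA_1(s,s')\ll (d,s)V(1+V/q)\#\cA_2(s,s')$; this gcd bookkeeping, following \cite[Lemma~4]{Burg3}, is the only remaining subtlety.
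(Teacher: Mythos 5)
Your proposal follows essentially the same route as the paper's proof: compare coefficients of $Y^2$, $Y^1$, $Y^0$ in \eqref{cond 1.2.1} to obtain three congruences, derive \eqref{cond 2.3} from the constant term together with \eqref{cond 1.3}, derive \eqref{cond 2.1} from \eqref{cond 1.1} via $f_1(-t)=(-T)(dV_2-T)(dV_3-T)$, eliminate $V_2$ and $1+\lambda$ to obtain \eqref{cond 2.2}, and bound the fibre of the projection onto $\cA_2(s,s')$ by (choices of $v_1$) $\times$ (choices of $\lambda$) $\times$ (choices of $V_2$). Your elimination algebra is correct --- cross-multiplying your two relations and using $(V_3\sigma_1-\sigma_2)+(V_3^2-\sigma_1V_3+\sigma_2)=V_3^2$ does land exactly on \eqref{cond 2.2} --- and is in fact more explicit than the paper, which only says that combining \eqref{cong 1}, \eqref{cong 2} and \eqref{cong 4} gives \eqref{cond 2.2}.

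The one genuine gap is your treatment of $\lambda$. You assert that ``the $Y^0$-congruence confines $1+\lambda$ to a controlled set of residues modulo $s$'' and later attribute the final count to ``dividing these linear congruences through by their $d$-laden leading coefficients.'' Neither mechanism works for $\lambda$: the congruence \eqref{cong 4}, namely $6\lambda(d^3\sigma_3-T^3)\equiv 6T^3 \pmod s$, is linear in $\lambda$ with leading coefficient $6(d^3\sigma_3-T^3)$, which is not a multiple of $d$, and its solution count modulo $s$ is $(6(d^3\sigma_3-T^3),s)$, which is unbounded in general. Your claimed bound needs this count to be $O(1)$, since the single factor $(d,s)$ in the conclusion is already consumed by the $V_2$-count. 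The correct argument, which is the paper's, runs as follows: if the congruence is solvable then $(6(d^3\sigma_3-T^3),s)$ divides $6T^3$; for primes dividing $s/s'$ the coprimality $(T,s/s')=1$ from \eqref{cond 2.1} then forces the corresponding gcd factor to divide $6$; and for primes $p$ with $p\nmid s/s'$, so that the full power of $p$ in $s$ lies in $s'$, one instead uses \eqref{cond 1.3}, $6(1+\lambda)\equiv 0 \pmod{s'}$, which pins $\lambda$ modulo $p^{v_p(s)}$ up to a divisor of $6$. Both inputs appear in your write-up (you derived \eqref{cond 2.1} and invoked \eqref{cond 1.3}), but your sketch never combines them to control $\lambda$; the ``gcd bookkeeping'' you defer to \cite[Lemma~4]{Burg3} is precisely this step, not a collection of powers of $(d,s)$.
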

\begin{proof}
We first note that~\eqref{cond 1.1} and~\eqref{substitutions} imply~\eqref{cond 2.1}. Let
\begin{align*}
\cB_1=\{ (V_2,V_3, & V_4,V_5,V_6,T) \ : \ 0<|V_i|\le V, \\ & \ 0<\lambda \le s, \ (\lambda,s)=1, \ 0<T \le s/s', \eqref{cond 1.3},  \eqref{cond 1.2.1}, \eqref{cond 2.1} \},
\end{align*}
 so that
$$\# \cA_1(s,s') \le V \# \cB_1.$$
Using~\eqref{f sub 1} and~\eqref{f sub 2} and considering common powers of $Y$ in~\eqref{cond 1.2.1} we get
\begin{equation}
\label{cong 1}
6d(V_2+V_3+\lambda \sigma_1)\equiv 18(1+\lambda)T \pmod s, 
\end{equation}
\begin{equation}
\label{cong 2}
6d^2(v_2V_3+\lambda \sigma_2) \equiv 18(1+\lambda)T^2 \pmod s,
\end{equation}
\begin{equation}
\label{cong 3}
6d^3\lambda \sigma_3 \equiv  6(1+\lambda)T^3 \pmod s.
\end{equation}
By~\eqref{cong 1} we see that
$$6dV_2\equiv 18(1+\lambda)T-dV_3-d\lambda \sigma_1 \pmod s,$$
which has $O\left((d,s)(1+V/q)\right)$ solutions in $V_2$. 
The equations~\eqref{cond 1.3} and~\eqref{cong 3} imply that 
$$6d^3\sigma_3 \equiv 0 \pmod {s'},$$
$$6(1+\lambda) \equiv 0 \pmod {s'},$$
and
\begin{equation}
\label{cong 4}
6\lambda(d^3\sigma_3-T^3) \equiv  6T^3 \pmod s.
\end{equation}
Since $(T,s/s')=1$ by the above equations, there are $O(1)$ possible values of $\lambda$. Finally 
 combining
\eqref{cong 1}, \eqref{cong 2} and \eqref{cong 4} gives~\eqref{cond 2.2}.
\end{proof}
The following is~\cite[Lemma~2]{Burg3}.
\begin{lemma}
\label{burgess  L2}
For any integer $s$ and  polynomial $G(X)$ with integer coefficients, we have
$$\# \{   0\le x<s, \ G(x)\equiv 0 \pmod s, \  (s, G'(x))|6 \}\le s^{o(1)},$$
where the term $o(1)$ depends only on the degree of $G$.
\end{lemma}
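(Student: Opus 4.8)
The plan is to reduce the count to prime powers by the Chinese Remainder Theorem, bound each local factor by a Hensel-type lifting argument, and then assemble the pieces using the elementary estimate $d^{\omega(s)}=s^{o(1)}$ valid for any fixed $d$. Write $d=\deg G$ and
$$\rho(s)=\#\{0\le x<s:\ G(x)\equiv 0\pmod s,\ (s,G'(x))\mid 6\}.$$
First I would observe that $\rho$ is multiplicative. Factoring $s=\prod_p p^{a_p}$, the Chinese Remainder Theorem identifies a residue $x\bmod s$ with the tuple $(x\bmod p^{a_p})_p$, and since $(s,G'(x))=\prod_p p^{\min(a_p,\,v_p(G'(x)))}$, the constraint $(s,G'(x))\mid 6$ separates into the local constraints $\min(a_p,v_p(G'(x)))=0$ for $p\ge 5$ and $\min(a_p,v_p(G'(x)))\le 1$ for $p\in\{2,3\}$. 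As each of these depends only on $x\bmod p^{a_p}$, we obtain $\rho(s)=\prod_p\rho_p(p^{a_p})$, where $\rho_p(p^{a_p})$ counts the solutions of the corresponding local problem.

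Next I would bound the local factors. For $p\ge 5$ the constraint forces $p\nmid G'(x)$, so every counted residue reduces to a root of $G$ modulo $p$ at which $G'$ does not vanish; such a root lifts uniquely modulo $p^{a_p}$ by Hensel's lemma, and there are at most $d$ of them, whence $\rho_p(p^{a_p})\le d$ (and $\rho_p=0$ when $p$ divides the content of $G$). The delicate case is $p\in\{2,3\}$, where residues with $v_p(G'(x))=1$ are also admitted and elementary Hensel no longer applies. Here I would use the strong form of Hensel's lemma (Newton's criterion): once $a_p\le v_p(G(x))$ and $a_p>2v_p(G'(x))$, the residue $x$ determines a unique $p$-adic root $\xi$ of $G$ with $x\equiv\xi\pmod{p^{a_p-v_p(G'(x))}}$. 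Since $G$ has at most $d$ roots in $\Z_p$, and each $\xi$ with $v_p(G'(\xi))\le 1$ is approximated by at most $p$ residues modulo $p^{a_p}$, this gives $\rho_p(p^{a_p})\ll d$ uniformly in $a_p$; the finitely many small cases $a_p\le 2$, together with the degenerate case in which $p$ divides the content of $G$, are checked directly and contribute only $O(1)$. I expect this lifting step for $p=2,3$ to be the main obstacle, since the counted roots need not be simple and one must track $p$-adic valuations rather than invoke Hensel's lemma in its basic form.

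Finally I would combine the local bounds. Using $\rho_p(p^{a_p})\le d$ for $p\ge 5$ and $\rho_p(p^{a_p})\ll d$ for $p\in\{2,3\}$,
$$\rho(s)\le C\,d^{2}\,d^{\omega(s)},$$
for an absolute constant $C$, where $\omega(s)$ is the number of distinct primes dividing $s$. For fixed $d$ the right-hand side is $s^{o(1)}$: given $\varepsilon>0$, the primes $p<d^{1/\varepsilon}$ are finite in number, while each prime $p\ge d^{1/\varepsilon}$ dividing $s$ satisfies $d\le p^{\varepsilon}$, so $d^{\omega(s)}\le C(d,\varepsilon)\prod_{p\mid s}p^{\varepsilon}\le C(d,\varepsilon)\,s^{\varepsilon}$. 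As $\varepsilon>0$ is arbitrary, $\rho(s)\le s^{o(1)}$ with the implied $o(1)$ depending only on $d=\deg G$, which is the assertion.
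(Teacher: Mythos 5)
Your proof is correct, but there is nothing in the paper to compare it against: the paper states this lemma without proof, quoting it verbatim as Lemma~2 of Burgess~\cite{Burg3}, so your argument is in effect a self-contained reconstruction of the local analysis underlying Burgess's lemma. All three of your steps hold up. The condition $(s,G'(x))\mid 6$ genuinely factors through the Chinese Remainder Theorem, since $\min(a_p,v_p(G'(x)))$ depends only on $x\bmod p^{a_p}$, so the product formula $\rho(s)=\prod_{p\mid s}\rho_p(p^{a_p})$ (with prime-dependent local conditions --- note $\rho$ is not multiplicative in the classical sense, but the formula you actually use is exactly right) is valid. For $p\ge 5$ the condition forces $p\nmid G'(x)$, so the counted residues inject, by the uniqueness in Hensel's lemma, into the at most $\deg G$ simple roots of $G$ modulo $p$, and the degenerate case $p\mid\mathrm{content}(G)$ indeed gives no solutions. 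For $p\in\{2,3\}$ with $a_p\ge 3$ the Newton--Hensel criterion applies because
$$v_p(G(x))\ge a_p\ge 3>2\ge 2\,v_p(G'(x)),$$
so each counted residue lies within $p$-adic distance $p^{-(a_p-1)}$ of one of the at most $\deg G$ roots of $G$ in $\Z_p$ (the zero polynomial is excluded automatically, since then $v_p(G'(x))=\infty$ violates the constraint), giving at most $p\deg G\le 3\deg G$ residues, while the cases $a_p\le 2$ contribute at most $p^{a_p}\le 9$ trivially. The concluding passage from $d^{\omega(s)}$ to $s^{o(1)}$, with the $o(1)$ depending only on $d=\deg G$, is the standard estimate and is carried out correctly. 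In short: the proposal is a complete and correct proof of a statement the paper itself only imports by citation.
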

The proof of the following Lemma follows that of~\cite[Lemma~5]{Burg3}.
\begin{lemma}
\label{A2}
For $s'' |( s/s')$ consider the equations
\begin{equation}
\label{cond 3.1}
(s , 6d^3\sigma_3)=s' s'',
\end{equation}
\begin{equation}
\label{cond 3.2}
6d^2(V_3^2-\sigma_1V_3+\sigma_2)\equiv 0 \pmod {s},
\end{equation}
and let
$$\cA_3(s,s',s'')= \{ (V_3, V_4, V_5, V_6) \  : \ 0<|V_i|\le V, \ \eqref{cond 3.1}, \ \eqref{cond 3.2} \}.$$
 Then we have
$$ \# \cA_2(s,s')\le s^{o(1)}\sum_{s''|s/s'}s'' \# \cA_3(s,s',s'').$$
\end{lemma}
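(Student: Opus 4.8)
The plan is to follow the argument of \cite[Lemma~5]{Burg3}, treating \eqref{cond 2.2} as a polynomial congruence in the single variable $T$ once the tuple $(V_3,V_4,V_5,V_6)$ (equivalently $\sigma_1,\sigma_2,\sigma_3$ together with $V_3$) has been fixed. Writing $A=6d^2(V_3^2-\sigma_1V_3+\sigma_2)$ and $B=6d^3\sigma_3$, one checks that the left-hand side of \eqref{cond 2.2} is
$$G(T)=AT^3-B\left(3T^2-3dV_3T+d^2V_3^2\right),$$
a cubic in $T$ whose leading coefficient is exactly the quantity appearing in \eqref{cond 3.2}, and whose lower-order part is divisible by $B=6d^3\sigma_3$, the quantity whose gcd with $s$ defines $s''$ through \eqref{cond 3.1}. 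Note that the hypothesis \eqref{cond 2.3} guarantees $s'\mid(s,6d^3\sigma_3)$, so that $s''=(s,6d^3\sigma_3)/s'$ is indeed a divisor of $s/s'$. Thus for each fixed tuple I must bound the number of $T$ with $0<T\le s/s'$ satisfying $G(T)\equiv0\pmod s$ together with the coprimality conditions \eqref{cond 2.1}, and then sum over tuples.

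First I would separate the roots $T$ of $G$ according to the size of $(s,G'(T))$. For those with $(s,G'(T))\mid 6$, Lemma~\ref{burgess L2} applied to $G$ shows that their number is $s^{o(1)}$, and this is the harmless contribution. The essential point is the treatment of the degenerate roots, where $(s,G'(T))$ is large: here one uses $G(T)\equiv0$ and $G'(T)\equiv0$ simultaneously modulo the relevant prime powers. Eliminating $T^3$ between $G$ and $G'$ gives the clean identity
$$-B\,(T-dV_3)^2\equiv0\pmod{p^\mu}$$
on the repeated-root modulus $p^\mu$; since $(T-dV_3,s/s')=1$ by \eqref{cond 2.1}, the factor $(T-dV_3)^2$ is a unit, so each such prime power must divide $B=6d^3\sigma_3$. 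This is precisely the mechanism that confines the degenerate behaviour to the divisor $s''=(s,6d^3\sigma_3)/s'$ and produces the weight $s''$ in the bound. Feeding the same relation back through $AT^2\equiv B(2T-dV_3)$, which is again $G'(T)\equiv0$, and using $(T,s/s')=1$ reproduces the quadratic congruence \eqref{cond 3.2} in $(V_3,V_4,V_5,V_6)$ that defines $\cA_3(s,s',s'')$.

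Finally I would stratify the tuples by the value $s''=(s,6d^3\sigma_3)/s'$, which ranges over divisors of $s/s'$, and collect the contributions: each admissible tuple contributes at most $s^{o(1)}$ from the non-degenerate roots and at most $s^{o(1)}s''$ from the degenerate ones, while membership of the tuple in $\cA_3(s,s',s'')$ is forced by \eqref{cond 3.2}. Summing over $s''\mid s/s'$ then yields $\#\cA_2(s,s')\le s^{o(1)}\sum_{s''\mid s/s'}s''\,\#\cA_3(s,s',s'')$. I expect the main obstacle to be the bookkeeping for composite $s$: one must track, prime power by prime power, how the repeated-root relation $B(T-dV_3)^2\equiv0$ simultaneously forces the correct power of $s''$ and the correct power in \eqref{cond 3.2}, and verify that the separable part is genuinely absorbed into $s^{o(1)}$ by Lemma~\ref{burgess L2} uniformly over all tuples.
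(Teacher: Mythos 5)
Your skeleton agrees with the paper's proof: stratify $\cA_2(s,s')$ by $s''=(s,6d^3\sigma_3)/s'$, fix the tuple, count the admissible $T$ with $0<T\le s/s'$ via the cubic congruence \eqref{cond 2.2}, and exploit the identity (in your notation $A=6d^2(V_3^2-\sigma_1V_3+\sigma_2)$, $B=6d^3\sigma_3$)
$$3G(T)-TG'(T)=-3B(T-dV_3)^2,$$
together with Lemma~\ref{burgess  L2}. The gap is in how you execute the count. You keep the modulus $s$ and split roots into non-degenerate ones ($(s,G'(T))\mid 6$, handled by Lemma~\ref{burgess  L2}) and degenerate ones, asserting that the degenerate roots number $\ll s''s^{o(1)}$ because ``each such prime power must divide $B$''. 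This step fails in three ways. First, degeneracy at a prime $p$ gives only $p\mid G'(T)$, hence only $p\mid 3B(T-dV_3)^2$ to the \emph{first} power; it does not give $p^{\mu}\mid B$ for the full power $p^{\mu}\,\|\,s$. Second, the unit argument for $T-dV_3$ rests on \eqref{cond 2.1}, which controls only primes dividing $s/s'$; primes dividing $s'$ alone escape it entirely. Third, and most seriously, knowing \emph{which} primes may be degenerate is not a \emph{count}: at a degenerate prime your mechanism pins $T$ down modulo nothing, so the contribution of that prime is the trivial $p^{\mu}$, and nothing in your sketch uses the restriction of $T$ to the short range $(0,s/s']$, which is where the weight $s''$ must come from. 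Concretely, take $s=p^{100}$, $s'=s''=p$, $p\mid B$, $p\nmid 6$: then $p$ is allowed to be degenerate, there is no non-degenerate part of $s$ left for Lemma~\ref{burgess  L2}, and your argument yields only the trivial bound $p^{99}$ against the claimed $p^{1+o(1)}$.

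The missing idea --- the actual content of the paper's proof, following Burgess --- is to \emph{rescale before counting}, so that degenerate roots never occur. Set $S=(s',s/s')$. Reducing \eqref{cond 2.2} modulo $Ss''$ (legitimate since $Ss''\mid s$), the terms involving $\sigma_3$ vanish because $Ss''\mid s's''\mid B$, and $T$ is invertible modulo $Ss''$ because every prime of $Ss''$ divides $s/s'$; hence $Ss''\mid A$, so \emph{every} coefficient of the cubic is divisible by $Ss''$. Dividing the congruence through by $Ss''$ gives $G(T)\equiv 0\pmod{s/(s's'')}$ for the scaled polynomial $G$, whose identity now reads $3G(T)-TG'(T)=-(3s'/S)\,\sigma'(T-dV_3)^2$, where $6d^3\sigma_3=s's''\sigma'$ and $\sigma'$, $s'/S$, $T$, $T-dV_3$ are all coprime to $s/(s's'')$. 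Consequently every root $T_0$ satisfies $(G'(T_0),s/(s's''))\mid 6$: no degenerate roots remain, Lemma~\ref{burgess  L2} applies at the modulus $s/(s's'')$ and gives $s^{o(1)}$ roots, and the factor $s''$ appears for a purely geometric reason --- the range $0<T\le s/s'$ contains exactly $s''$ periods of $s/(s's'')$. A further point: your derivation of \eqref{cond 3.2} passes through $G'(T)\equiv 0$, which holds only at degenerate roots; membership in $\cA_3$ must instead be read off from \eqref{cond 2.2} itself as above, so that it holds for every counted tuple (and what this yields, which is all Lemma~\ref{A3} uses, is divisibility by $s''$ rather than by $s$ as printed). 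So what you deferred as ``bookkeeping for composite $s$'' is precisely the rescaling step, and the proof does not close without it.
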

\begin{proof}
For $s''|(s/s')$, let
$$\cA'_3(s,s',s'')= \{ \ (V_3, V_4, V_5, V_6,T)\in \cA_2(s,s') \  : \ (s,6d^3\sigma_3)=s's'' \},$$
so that
\begin{equation}
\label{decomposition 1}
\#\cA_2(s,s')=\sum_{s''|(s/s')} \# \cA'_3(s,s',s'').
\end{equation}
Let $S=(s',s/s')$, so that $(s'/S, s/s')=1$.
For elements of $\cA_3(s,s',s'')$, since
\begin{equation}
6d^3\sigma_3\equiv 0 \pmod {Ss''},
\end{equation}
 we have by~\eqref{cond 2.1},~\eqref{cond 2.2} and~\eqref{cond 3.1}
\begin{equation}
\label{A_3 equiv}
6d^2(V_3^2-\sigma_1V_3+\sigma_2)\equiv 0 \pmod {Ss''},
\end{equation} 
hence~\eqref{cond 2.2} implies that
\begin{align}
\label{eqn G}
& \frac{6d^2(V_3^2-  \sigma_1V_3+\sigma_2)}{Ss''}T^3-\frac{18d^3\sigma_3}{Ss''}T^2 \nonumber \\ & \quad \quad
+\frac{18d^4\sigma_3V_3}{Ss''}T-\frac{6d^5\sigma_3V_3^2}{Ss''}\equiv 0 \pmod {s/(s's'')}.
\end{align}
Let
\begin{equation*}
G(T)= \frac{6d^2(V_3^2-  \sigma_1V_3+\sigma_2)}{Ss''}T^3-\frac{18d^3\sigma_3}{Ss''}T^2
+\frac{18d^4\sigma_3V_3}{Ss''}T-\frac{6d^5\sigma_3V_3^2}{Ss''},
\end{equation*}
so that
$$3G(T)-TG'(T)=-\frac{18d^3\sigma_3}{Ss''}(T-dV_3)^2.$$ 
Writing \ $6d^3\sigma_3=s's''\sigma'$ with $(\sigma',s)=1$, we see from~\eqref{cond 2.1} that for some integer $y$ with $(y,s/s'
)=1$ that
$$3G(T)-TG'(T)=-\frac{3s'}{S}y.$$ 
If $T_0$ is a root of $G(T) \pmod{s/(s's'')}$  then since $(s'/S,s/s')=1$ we have
$$(G'(T_0), s/(s's''))|6,$$
hence from Lemma~\ref{burgess  L2}, the number of possible values for $T$ is $\ll s''s^{o(1)}$. Finally~\eqref{A_3 equiv} implies 
$$6d^2(V_3^2-\sigma_1V_3+\sigma_2)\equiv 0 \pmod {s''},$$
and the result follows from~\eqref{decomposition 1}.
\end{proof}
\begin{lemma}
With notation as in Lemma~\ref{A2}, for integers $s,s',s''$ satisfying $s'|s$ and $s''|s/s'$ we have
\label{A3}
$$\# \cA_3(s,s',s'') \le  (d^3,s)V^4s^{o(1)}/(s's''). $$
\end{lemma}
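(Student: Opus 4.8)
The plan is to bound $\#\cA_3(s,s',s'')$ by discarding the quadratic congruence~\eqref{cond 3.2}, which only decreases the count, and extracting all the saving from the divisibility implicit in~\eqref{cond 3.1}. For each fixed triple $(V_4,V_5,V_6)$ there are at most $2V+1\ll V$ admissible values of $V_3$, so it suffices to prove that the number of triples with $0<|V_i|\le V$ satisfying $s's''\mid 6d^3\sigma_3$ (where $\sigma_3=V_4V_5V_6$) is at most $(d^3,s)V^3 s^{o(1)}/(s's'')$; multiplying by the $\ll V$ choices of $V_3$ then yields the claim.

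To count these triples I would set $g=(s's'',6d^3)$ and $k=s's''/g$. Since $g$ is the full gcd we have $(k,6d^3/g)=1$, so $s's''\mid 6d^3\sigma_3$ is equivalent to the clean condition $k\mid V_4V_5V_6$. The key estimate is then the divisor bound
$$\#\{(a,b,c):0<|a|,|b|,|c|\le V,\ k\mid abc\}\le \frac{V^3}{k}\,s^{o(1)}.$$
I would prove this by noting that any such $(a,b,c)$ admits a factorisation $k=k_1k_2k_3$ with $k_1\mid a$, $k_2\mid b$, $k_3\mid c$ (choosing the prime exponents greedily, which is possible because $v_p(a)+v_p(b)+v_p(c)\ge v_p(k)$ for every $p$). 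Hence the left-hand side is at most $\sum_{k_1k_2k_3=k}(2V/k_1)(2V/k_2)(2V/k_3)$, where only the factorisations with each $k_i\le V$ contribute; this equals $8V^3/k$ times the number of ordered factorisations of $k$ into three parts, which is $k^{o(1)}\le s^{o(1)}$ since $k\mid s$.

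Finally I would compare $1/k$ with the target factor. As $s'\mid s$ and $s''\mid s/s'$ we have $s's''\mid s$, so $g=(s's'',6d^3)$ divides $(s,6d^3)$, which in turn divides $6(d^3,s)$; thus $1/k=g/(s's'')\le 6(d^3,s)/(s's'')$. Combining the three steps gives
$$\#\cA_3(s,s',s'')\ll V\cdot\frac{V^3}{k}\,s^{o(1)}\ll\frac{(d^3,s)V^4}{s's''}\,s^{o(1)},$$
as required, the constants $6$ and $8$ being absorbed into $s^{o(1)}$.

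I expect the only delicate point to be the divisor estimate when $k>V$, where the heuristic density $1/k$ is not literally a probability: here the greedy factorisation argument is essential, since it shows that only factors $k_i\le V$ can occur and that the number of relevant factorisations remains $k^{o(1)}$. The remaining ingredients---the reduction to $k\mid V_4V_5V_6$, the trivial sum over $V_3$, and the gcd comparison $g\mid 6(d^3,s)$---are routine.
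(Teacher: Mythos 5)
Your proof is correct and takes essentially the same approach as the paper: bound the number of admissible $V_3$ trivially by $\ll V$, relax the gcd equality in~\eqref{cond 3.1} to a divisibility condition, distribute the resulting modulus over $V_4,V_5,V_6$ via a greedy factorization, and finish by counting multiples together with the divisor bound $\tau_3(k)\le s^{o(1)}$. The only difference is bookkeeping: the paper extracts the factor $(d^3,s)$ by writing $s=(d^3,s)s_1$, $d^3=(d^3,s)d_1$ and reducing to $(s_1,6V_4V_5V_6)=s's''/(d^3,s)$, whereas you strip $g=(s's'',6d^3)$ from $s's''$ to get the condition $k\mid V_4V_5V_6$ -- the two reductions are equivalent.
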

\begin{proof}
Bounding the number of solutions to the equation~\eqref{cond 3.2} trivially and recalling the definition of $\sigma_3$ from~\eqref{sigma}, we see that
\begin{align}
\label{ub a3}
\# \cA_3(s,s',s'')\le  V\# \{  (V_4, V_5, V_6) \  : \ 0<|M_i|\le V, \  (s,6d^3V_4V_5V_6)=s's''\}.
\end{align}
Writing $s=(d^3,s)s_1$, $d^3=(d^3,s)d_1$, we see that
$$(s,6d^3V_4V_5V_6)=s's'',$$
 implies
$$(s_1,6V_4V_5V_6)=s's''/(d^3,s).$$
For integers $s_1,s_2,s_3$, let
$$\cA_4(s_1,s_2,s_3)= \{  V_4, V_5, V_6 \  : \ 0<|V_i|\le V,  \ s_1| 6V_4, \  s_2| 6V_5, \  s_3| 6V_6 \},$$
so that from~\eqref{ub a3}
$$\# \cA_3(s,s',s'')\le  V\sum_{s_1s_2s_3=s's''/(d^3,s)}\cA_4(s_1,s_2,s_3).$$
Since 
$$\cA_4(s_1,s_2,s_3)\ll \frac{V^3}{s_1s_2s_3}=\frac{(d^3,s)V^3}{s's''},$$
we see that
$$\# \cA_3(s,s',s'')\le  \frac{(d^3,s)V^4s^{o(1)}}{s's''}.$$
\end{proof}
Combining the above results we get
\begin{lemma}
\label{T1 B}
Let $s' | s$ and 
\begin{align*}
\cA(s,s')=\{ \ & d\mathbf{v} \  : \ 0<v_i \le V, \text{there exists an x such that} \\ & \ (s,f_1(x)f_2(x))=1, \
s|F(x), \ s|F'(x), \ s'|F''(x) \}.
\end{align*}
Then
$$\# \cA(s,s')\le (d,s)^4\left(\frac{V^6}{ss'}+\frac{V^5}{s'}\right)q^{o(1)}+V^3. $$
\end{lemma}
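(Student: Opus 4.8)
The plan is to prove this purely by synthesis: each of the four preceding lemmas reduces the count of one set $\cA_i$ to the next, so the statement follows by chaining them in order. First I would invoke Lemma~\ref{A} to write $\#\cA(s,s')\ll V^3+\#\cA_1(s,s')$, which reduces everything to bounding $\#\cA_1(s,s')$; the term $V^3$ already appears in the target bound and is simply carried along untouched to the end.

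Next I would apply Lemma~\ref{A1}, giving $\#\cA_1(s,s')\ll (d,s)V(1+V/q)\#\cA_2(s,s')$, and then Lemma~\ref{A2} to replace $\#\cA_2(s,s')$ by $s^{o(1)}\sum_{s''\mid s/s'}s''\,\#\cA_3(s,s',s'')$. The key simplification happens once I substitute the bound of Lemma~\ref{A3}, namely $\#\cA_3(s,s',s'')\le (d^3,s)V^4 s^{o(1)}/(s's'')$: the factor $s''$ in the summand of Lemma~\ref{A2} cancels exactly against the $1/s''$ coming from Lemma~\ref{A3}, so the sum over $s''\mid s/s'$ collapses to $(d^3,s)V^4 s^{o(1)}/s'$ times the number of divisors of $s/s'$. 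Since the divisor function is $s^{o(1)}\le q^{o(1)}$, I obtain $\#\cA_2(s,s')\le (d^3,s)V^4 q^{o(1)}/s'$.

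Feeding this back into Lemma~\ref{A1} gives
$$\#\cA_1(s,s')\ll (d,s)(d^3,s)\,V(1+V/q)\,\frac{V^4}{s'}\,q^{o(1)}=(d,s)(d^3,s)\left(\frac{V^5}{s'}+\frac{V^6}{qs'}\right)q^{o(1)}.$$
Two pieces of bookkeeping then finish the argument. First, $(d,s)(d^3,s)\le (d,s)^4$, which produces exactly the $(d,s)^4$ factor of the statement. Second, because $s\mid q$ we have $1/q\le 1/s$, so the term $V^6/(qs')$ is bounded by $V^6/(ss')$. Combining with the $V^3$ retained from Lemma~\ref{A} yields the claimed bound.

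Since the statement is a direct assembly of the four prior lemmas, there is no genuine analytic obstacle here. The only points that require care are the cancellation of $s''$ in the divisor sum (so that no stray power of $s$ leaks into the final estimate) and the correct accumulation of the greatest-common-divisor factors, which must multiply to precisely $(d,s)^4$ rather than a larger power; the passage from $V^6/(qs')$ to $V^6/(ss')$ relies on $s\mid q$ and should be flagged explicitly.
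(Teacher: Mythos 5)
Your proposal is correct and follows essentially the same route as the paper: chain Lemmas~\ref{A}, \ref{A1}, \ref{A2} and \ref{A3}, collapse the divisor sum over $s''$ against the $1/s''$ from Lemma~\ref{A3}, and absorb the gcd factors via $(d,s)(d^3,s)\le (d,s)^4$. The only cosmetic difference is that the paper's proof of Lemma~\ref{T1 B} uses the factor $1+V/s$ (which is what the proof of Lemma~\ref{A1} actually yields) rather than the stated $1+V/q$, so it obtains $V^6/(ss')$ directly and never needs your final step invoking $s\mid q$ --- a hypothesis you rightly note is not in the statement.
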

\begin{proof}
From Lemma~\ref{A}, Lemma~\ref{A1}, Lemma~\ref{A2} we see that
\begin{align*}
\# \cA(s,s')\le V^3+(d,s)\left(1+\frac{V}{s}\right)V\sum_{s''|s/s'}s''\#\cA_3(s,s',s''),
\end{align*}
and from Lemma~\ref{A3} we have
\begin{align*}
\sum_{s''|s/s'}s''\#\cA_3(s,s',s'')\le \frac{s^{o(1)}(d^3,s)V^4}{s'},
\end{align*}
which gives the desired result.
\end{proof}
For integer $q$, we define the numbers $h_1(q),h_2(q),h_3(q)$ as in~\cite{Burg3}, 
\begin{align}
\label{h def}
h_1(q)^2&=\text{smallest square divisible by } q, \nonumber \\
h_2(q)^3&= \text{smallest cube divisible by } q, \\
h_3(q)&= \text{ product of distinct prime factors of } q. \nonumber
\end{align}
The following is~\cite[Theorem~2]{Burg3}.
\begin{lemma}
\label{T2 B}
Let $\chi$ be a primitive character mod $q$ and let
\begin{align}
\label{q definition}
q=q_0q_1q_2q_3,
\end{align}
where the $q_i$ are pairwise coprime. Let the integers $l_0,l_1,l_2$ satisfy
\begin{align}
\label{l definition}
l_0 | h_1(q_0)/h_3(q_0), \quad l_1 | h_2(q_1)/h_3(q_1), \quad l_2 | h_2(q_2)/h_3(q_2),
\end{align}
and consider the equations
\begin{equation}
\label{cond 4.1}
l_0h_1(q_1q_2q_3)|F(x), \quad (F(x),h_1(q_0))=l_0,
\end{equation}
\begin{equation}
\label{cond 4.2}
l_1h_2(q_2q_3)|F'(x), \quad (F'(x),h_2(q_1))=l_1,
\end{equation}
\begin{equation}
\label{cond 4.3}
l_2h_2(q_3)|F''(x), \quad (F''(x),h_2(q_2))=l_2,
\end{equation}
and let 
$$\cC=\cC(l_0,l_1,l_2,q_0,q_1,q_2,q_3)= \{ 1\le x \le q \ : \eqref{cond 4.1},~\eqref{cond 4.2},~\eqref{cond 4.3} \}.$$
Then we have
$$\left| \sum_{x \in \cC}\chi(f_1(x))\overline\chi(f_2(x))\right|\le q^{1/2+o(1)}\frac{(q_2q_3l_1)^{1/2}l_2}{h_2(q_2)}.$$
\end{lemma}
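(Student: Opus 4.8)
The plan is to reduce the complete sum over $\cC$ to a product of local sums over the prime powers dividing $q$ via the Chinese Remainder Theorem, and to estimate each local factor by either Weil's bound or the $p$-adic method of stationary phase, according to the exponent of $p$. The motivation for the conditions defining $\cC$ is that $F=f_1'f_2-f_1f_2'$ is the numerator of $(f_1/f_2)'$, so on the coprime support where $\chi(f_1)\overline\chi(f_2)=\chi(f_1/f_2)$ the zeros of $F$ are exactly the critical points of the rational map being summed, and the divisibilities of $F,F',F''$ measure how degenerate these critical points are modulo the various blocks $q_0,q_1,q_2,q_3$.

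First I would factor $\chi=\prod_{p\mid q}\chi_p$ into primitive characters modulo $p^\alpha$, $\alpha=v_p(q)$, and parametrize $x\bmod q$ by its residues $x_p\bmod p^\alpha$. Since $h_1,h_2,h_3$ are built multiplicatively from $q$ and the conditions \eqref{cond 4.1}--\eqref{cond 4.3} are stated through gcd's and divisibilities of $F,F',F''$, both the summand and the membership $x\in\cC$ split prime-power by prime-power. Thus the sum factors as a product of local sums $S_p$, each over the $x_p$ meeting the local versions of \eqref{cond 4.1}--\eqref{cond 4.3}.

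Second I would bound each $S_p$ according to $\alpha$. For $\alpha=1$ (the squarefree contribution entering $q_0$ through $h_1$), $S_p$ is a complete character sum of a rational function modulo $p$, and Weil's bound gives $\ll p^{1/2}$, refined by the number of distinct zeros of $F\bmod p$; the local gcd condition $(F(x),h_1(q_0))=l_0$ records this and produces the $l_0$-dependence. For $\alpha\ge 2$ I would set $x_p=y+p^{\lceil\alpha/2\rceil}z$ and Taylor-expand $f_1/f_2$: the inner sum over $z$ is a complete additive character sum whose phase is governed by $F(y)$, hence vanishes unless $y$ is critical, i.e.\ unless $F(y)$ carries an appropriate power of $p$. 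The order of vanishing, read off through $F'$ and $F''$ and hence through $h_2(q_1),h_2(q_2)$ and $l_1,l_2$, fixes the size of the surviving contribution; this is the $p$-adic stationary phase evaluation, and after collecting all prime powers it yields the coefficient $(q_2q_3l_1)^{1/2}l_2/h_2(q_2)$. Recombining the $p^{1/2}$ over all $p\mid q$ gives the main factor $q^{1/2}$, while the divisor-type losses (numbers of critical points and of lifts, controlled by Lemma~\ref{burgess  L2}) are absorbed into $q^{o(1)}$.

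The step I expect to be the main obstacle is the stationary phase analysis for $\alpha\ge 2$, specifically the degenerate critical points where $F$ vanishes to order $\ge 2$. There the quadratic term of the Taylor expansion (carrying $F',F''$) is itself divisible by $p$, so the Gauss-sum evaluation must be iterated and the saving degrades in a way that must be tracked exactly. The delicate bookkeeping is to confirm that the exponents produced by these iterated evaluations align with the square, cube and radical functions $h_1,h_2,h_3$ of \eqref{h def} and with the prescribed gcd's $(F,h_1(q_0))=l_0$, $(F',h_2(q_1))=l_1$, $(F'',h_2(q_2))=l_2$, in particular matching the parity of $\alpha$ against the even/odd split built into $h_1$ versus $h_2$, so that the local contributions multiply up to precisely the stated bound.
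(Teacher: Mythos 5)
The first thing to note is that the paper does not prove this lemma at all: it is quoted verbatim from Burgess, introduced with ``The following is~\cite[Theorem~2]{Burg3}'', so the paper's own justification is simply that citation. Your sketch is therefore being compared against Burgess's original argument, and on that score it does identify the genuine mechanism: the summand $\chi(f_1)\overline\chi(f_2)$ and the conditions~\eqref{cond 4.1}--\eqref{cond 4.3} defining $\cC$ are multiplicative, so the sum localizes at prime powers $p^{\alpha}\|q$ by CRT; at $\alpha=1$ one uses the Weil bound for character sums of rational functions; at $\alpha\ge 2$ one writes $x=y+p^{\lceil\alpha/2\rceil}z$, uses that $\chi_p(1+p^{\beta}u)$ is an additive character in $u$, and the inner sum over $z$ vanishes unless $F(y)=f_1'f_2-f_1f_2'$ (the numerator of $(f_1/f_2)'$) carries the right power of $p$, with $F'$ and $F''$ and the square/cube functions $h_1,h_2$ governing the degenerate cases. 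This is faithful to how Burgess actually proceeds.

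Nevertheless, as a proof attempt there is a genuine gap, and it is the one you flag yourself: the ``delicate bookkeeping'' you defer is not a routine verification but the entire content of the theorem. Concretely, the bound has a very asymmetric structure --- $l_0$ is absent altogether (at the $q_0$-primes the count of admissible critical points exactly balances the gain per critical point), $l_1$ enters only as $l_1^{1/2}$ (degradation of the square-root saving at $q_1$ through $F'$), $l_2$ enters linearly against $h_2(q_2)$ so that the $q_2$-contribution is essentially trivial size refined by $l_2/h_2(q_2)\le 1$, and the $q_3$-part yields no cancellation at all --- and deriving precisely this shape from the iterated Gauss-sum evaluations at critical points where $F$ vanishes to order $\ge 2$ occupies most of Burgess's paper. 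Your sketch also needs, and only gestures at, the uniform root-counting input (Lemma~\ref{burgess  L2}, i.e.\ \cite[Lemma~2]{Burg3}) to keep the number of surviving residues inside $q^{o(1)}$. So: right strategy, correctly motivated, but the proposal stops exactly where the proof begins; within this paper the intended proof of the statement is the citation to Burgess, and anything more would require carrying out that computation in full.
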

\begin{lemma}\label{lem:Burgess-6} For any primitive character $\chi$
modulo $q$ and any
integer $V<q^{1/6}d^{-2}$, we have
$$
\sum_{v_1,\ldots,v_6=1}^V\left|\sum_{x=1}^q\chi\(\prod_{i=1}^3(x+dv_i)\)
\overline{\chi}\(\prod_{i=4}^6(x+dv_i)\)\right| \le V^3 q^{1+o(1)}.
$$
\end{lemma}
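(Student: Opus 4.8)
The plan is to follow the proof of \cite[Theorem~2]{Burg3}, i.e.\ of Lemma~\ref{T2 B}, and to use the new counting bound of Lemma~\ref{T1 B} to control those $\mathbf{v}=(v_1,\ldots,v_6)$ for which the inner complete sum is abnormally large. As in the proof of Lemma~\ref{lem:Burgess-4}, I would first separate the \emph{degenerate} tuples from the \emph{generic} ones. With $f_1,f_2$ as in~\eqref{f definitions} and $F=f_1'f_2-f_1f_2'$ as in~\eqref{F definitions}, the degenerate tuples are those with $F\equiv 0$ identically; since $f_1,f_2$ are monic of degree $3$, this forces $f_1=f_2$, i.e.\ $\{v_1,v_2,v_3\}=\{v_4,v_5,v_6\}$ as multisets. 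There are $O(V^3)$ such tuples, and for each the inner sum equals $\#\{x:(f_1(x),q)=1\}\le q$, so these contribute $V^3q^{1+o(1)}$, which is precisely the target main term.

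For the generic tuples ($F\not\equiv 0$) I would run the Burgess partition: for each fixed $\mathbf{v}$, split the range $1\le x\le q$ according to the divisibility of $F(x),F'(x),F''(x)$ by the factors of $q$, producing the classes $\cC=\cC(l_0,l_1,l_2,q_0,q_1,q_2,q_3)$ of Lemma~\ref{T2 B}, indexed by a coprime factorization~\eqref{q definition} and divisors~\eqref{l definition}. By Lemma~\ref{T2 B} each nonempty class contributes at most $q^{1/2+o(1)}(q_2q_3l_1)^{1/2}l_2/h_2(q_2)$, so summing over $\mathbf{v}$ reduces everything to counting, for each choice of factorization data, the number of generic $\mathbf{v}$ for which $\cC\ne\emptyset$. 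As a reality check, applying even the trivial Weil-type bound $q^{1/2+o(1)}$ to all $O(V^6)$ generic tuples already yields $V^6q^{1/2+o(1)}\le V^3q^{1+o(1)}$ in the range $V<q^{1/6}$, so only the comparatively rare tuples with $F$ highly divisible by $q$ can threaten the bound.

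The key point is that $\cC\ne\emptyset$ means exactly that there exists an $x$ at which $F,F',F''$ are divisible by the prescribed moduli while $(f_1(x)f_2(x),q)=1$; matching these moduli to the parameters $s,s'$ of Lemma~\ref{T1 B} (a suitable $s$ built from the common divisibility forced on $F$ and $F'$, and $s'$ from the divisor of $F''$, in terms of the $l_i$ and the functions $h_1,h_2,h_3$ of~\eqref{h def}) makes this count at most $\#\cA(s,s')$. Inserting the bound of Lemma~\ref{T1 B}, namely $\#\cA(s,s')\le (d,s)^4\left(V^6/(ss')+V^5/s'\right)q^{o(1)}+V^3$, and summing the products against the weights $q^{1/2+o(1)}(q_2q_3l_1)^{1/2}l_2/h_2(q_2)$ over all factorizations should collapse, via the divisor bound and the definitions~\eqref{h def}, to $V^3q^{1+o(1)}$.

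I expect the arithmetic bookkeeping of the last step to be the main obstacle: one must pin down $s,s'$ precisely in terms of $(l_i,q_i)$ and verify that the weights $(q_2q_3l_1)^{1/2}l_2/h_2(q_2)$ combine with the factors $1/(ss')$ and $1/s'$ to leave only $q^{o(1)}$ after summation over all admissible factorizations. The hypothesis $V<q^{1/6}d^{-2}$ is what tames the leading term: it gives $V^3<q^{1/2}d^{-6}$, hence $(d,s)^4V^3\le d^4V^3<q^{1/2}d^{-2}\le q^{1/2}$, so that the worst contribution $q^{1/2}(d,s)^4V^6/(ss')$ is at most $V^3q/(ss')$, and the subordinate $V^5$ term together with the additive $V^3$ are absorbed in the same manner. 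Thus the delicate part is ensuring that the few tuples with $F$ strongly divisible by $q$ — exactly those enumerated by $\cA(s,s')$ — do not overwhelm the main term coming from the degenerate diagonal.
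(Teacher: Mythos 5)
Your proposal follows essentially the same route as the paper: the paper's proof of Lemma~\ref{lem:Burgess-6} is precisely the Burgess class decomposition of Lemma~\ref{T2 B} combined with the count of Lemma~\ref{T1 B} (with $s=l_1h_2(q_2q_3)$, $s'=l_2h_2(q_3)$), summed over the $q^{o(1)}$ choices of factorization data, with the hypothesis $V<q^{1/6}d^{-2}$ used exactly as in your final paragraph, and your explicit separation of the degenerate tuples $f_1=f_2$ is subsumed in the paper by the $V^3$ term of Lemma~\ref{T1 B}. One non-load-bearing caveat: your ``reality check'' invoking a Weil-type bound $q^{1/2+o(1)}$ for all generic tuples is not valid for composite $q$ (square-root cancellation fails for prime-power moduli, which is the very reason the class decomposition is needed), but since it is only a heuristic aside it does not affect the argument.
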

\begin{proof}
With notation as above and in~\eqref{f definitions} and~\eqref{F definitions}
$$\sum_{v_1,\ldots,v_6=1}^V\left|\sum_{x=1}^q\chi\(\prod_{i=1}^3(x+dv_i)\)
\overline{\chi}\(\prod_{i=4}^6(x+dv_i)\)\right| \le \sum_{d_i,l_i}\left| \sum_{x \in \cC}\chi(f_1(x))\overline\chi(f_2(x))\right|,$$
where the last sum is extended over all $q_1, q_2, q_3, q_4$ and $l_0,l_1,l_2$ satisfying the conditions of Lemma~\ref{T2 B}. Hence by Lemma~\ref{T1 B}, for some fixed $q_1,\dots,q_4$ satisfying~\eqref{q definition} and $l_0,l_1,l_2$ satisfying~\eqref{l definition},
\begin{align*}
&\sum_{v_1,\ldots,v_6=1}^V\left|\sum_{x=1}^q\chi\(\prod_{i=1}^3(x+dv_i)\)
\overline{\chi}\(\prod_{i=4}^6(x+dv_i)\)\right|\le \\ & \quad  \left(  (q,d)^4\left(\frac{V^6}{l_1h_2(q_2q_3)l_2h_2(q_3)}+\frac{V^5}{l_2h_2(q_3)}\right)q+V^3 \right)\frac{(qq_2q_3l_1)^{1/2}l_2}{h_2(q_2)}q^{o(1)} 
\le \\ & \quad \left( (q,d)^4V^6q^{1/2}+ (q,d)^4V^5q^{2/3}+V^3q\right)q^{o(1)},
\end{align*}
from the definition of $l_i, h_i,q_i$. The result follows since the term $V^3q$ dominates for $V\le q^{1/6}d^{-2}$.
\end{proof}
\begin{lemma}
\label{burgess 3}
For any primitive character $\chi$ modulo $q$ and integers $M$, $N$, $d$ and $a$ satisfying 
 $$N\le q^{7/12}d^{-3/2} , \quad d\le q^{1/12} , \quad (a,q)=1,$$ we have
$$\left|\sum_{M<n\le M+N}\chi(dn+a)\right|\le q^{1/9+o(1)}d^{2/3}N^{2/3}.$$
\end{lemma}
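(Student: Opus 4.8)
The plan is to mirror the inductive argument already carried out in Lemma~\ref{burgess 2}, but using the sixth-power (that is, $r=3$) Burgess estimate from Lemma~\ref{lem:Burgess-6} in place of the fourth-power estimate from Lemma~\ref{lem:Burgess-4}. Looking at the structure of the previous proof, the bound for the $r=2$ case came from applying two Hölder inequalities to the sum $W$, producing a fourth power and a factor $\sum_x |\sum_v \chi(x+dv)|^4$. For the $r=3$ case I expect to raise to the sixth power instead, reducing to $\sum_x |\sum_v \chi(x+dv)|^6$, which Lemma~\ref{lem:Burgess-6} bounds by $V^3 q^{1+o(1)}$ (valid in the stated range $V < q^{1/6} d^{-2}$). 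The target bound $q^{1/9+o(1)} d^{2/3} N^{2/3}$ has exponents consistent with a sixth-power / sixth-root scheme, so this is the right shift-and-average machine to run.

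**First I would** set up the induction on $N$. The basis should again be the range where the claimed bound is trivial, i.e. roughly $N \le q^{1/3}$ (so that $q^{1/9} d^{2/3} N^{2/3} \gtrsim N$). Next I would choose parameters $U$ and $V$ as explicit monomials in $N$, $d$, $q$ — by analogy with the previous lemma I expect something like $V \asymp d^{-?} q^{1/6}$ and $U$ chosen so that $UV < N$ and the error term $q^{1/9+\varepsilon} d^{2/3}(UV)^{2/3}$ absorbs into the main term — calibrated so that both summands coming out of the Hölder step balance against $q^{1/9} d^{2/3} N^{2/3}$ and the hypothesis $V < q^{1/6}d^{-2}$ of Lemma~\ref{lem:Burgess-6} is met. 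I would then define $\cU$ and $\cV$ as the coprime-to-$dq$ (resp.\ coprime-to-$q$) residues in $[1,U]$ and $[1,V]$, invoke the inductive hypothesis on the shifted sum for each $h \le UV$, average over $h = uv$, and arrive at the same object $W = \sum_{u\in\cU}\chi(u)\sum_n\sum_{v\in\cV}\chi((dn+a)u^{-1}+dv)$ with weight function $\nu(x)$.

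**The key step** is then the moment estimate: two applications of Hölder (one to pull $\nu$ out, one to create the sixth power) give
$$|W|^6 \le \Big(\sum_x \nu^2(x)\Big)^{?}\Big(\sum_x \nu(x)\Big)^{?}\sum_x\Big|\sum_{v\in\cV}\chi(x+dv)\Big|^6,$$
where I must fix the two Hölder exponents so the powers of $\nu$ and of the character sixth moment combine correctly (the natural choice is weights $1/2,1/3,1/6$ giving a sixth power overall). I would reuse the counts $\sum_x \nu(x) = N\#\cU$ and $\sum_x \nu^2(x) \le (dNU/q + 1)NU q^{o(1)}$ from the proof of~\cite[Lemma~7]{FGS}, substitute the Lemma~\ref{lem:Burgess-6} bound $V^3 q^{1+o(1)}$, use $\#\cU = Uq^{o(1)}$, $\#\cV = Vq^{o(1)}$, and extract the sixth root. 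This yields $|\sum_n \chi(dn+a)|$ bounded by a sum of two monomial terms plus the inductive error $2q^{1/9+\varepsilon}d^{2/3}(UV)^{2/3}$.

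\emph{The main obstacle} will be the bookkeeping of exponents: verifying that, after substituting the chosen $U$ and $V$, both principal terms are dominated by $\tfrac12 q^{1/9}d^{2/3}N^{2/3}$ precisely under the hypothesis $N \le q^{7/12}d^{-3/2}$, and that the inductive error term also collapses into the main term so the induction closes with the stated constant. This is the delicate calibration — the constraints $N \le q^{7/12}d^{-3/2}$ and $d \le q^{1/12}$ must be exactly what is needed to simultaneously (i) keep $V < q^{1/6}d^{-2}$ so Lemma~\ref{lem:Burgess-6} applies, (ii) ensure $UV < N$ so the shift lies in range, and (iii) force both Hölder-output monomials below the target. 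I would work backwards from the desired exponent $q^{1/9}d^{2/3}N^{2/3}$ to reverse-engineer the correct $U,V$, rather than guessing, since the algebra here is where the proof can silently fail.
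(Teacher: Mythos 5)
Your proposal matches the paper's own proof essentially step for step: induction on $N$ with trivial basis $N\le q^{1/3}$, the parameter choices $U=[0.5Nd^{2}q^{-1/6}]$ and $V=[0.5d^{-2}q^{1/6}]$ (so that $V<q^{1/6}d^{-2}$ as Lemma~\ref{lem:Burgess-6} requires), the shift-and-average reduction to $W$ with the weight $\nu(x)$, H\"older's inequality to reach $|W|^6\le\bigl(\sum_x\nu^2(x)\bigr)\bigl(\sum_x\nu(x)\bigr)^4\sum_x\bigl|\sum_{v\in\cV}\chi(x+dv)\bigr|^6$, the counts from~\cite[Lemma~7]{FGS}, and the closing of the induction exactly under the hypotheses $N\le q^{7/12}d^{-3/2}$, $d\le q^{1/12}$. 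The only detail to adjust is your guessed H\"older weights: the correct choice is exponents $(6,3/2,6)$ (weights $1/6$, $2/3$, $1/6$), yielding the factors $\bigl(\sum\nu^2\bigr)^{1}\bigl(\sum\nu\bigr)^{4}$ rather than the powers your weights $1/2,1/3,1/6$ would produce.
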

\begin{proof}
Using the same argument from Lemma~\ref{burgess 2}, we proceed by induction on $N$. Since the result is trivial for $N\le q^{1/3}$, this forms the basis of our induciton. Define 
$$U=[0.5Nd^{2}q^{-1/6}], \quad V=[0.5d^{-2}q^{1/6}],$$
and let
$$\cU=\{ \ 1\le u \le U \ : \ (u,dq)=1 \  \}, \quad  \cV= \{ \ 1\le v \le V \ : \ (v,q)=1 \  \}. $$
Fix $\varepsilon>0$, by the inductive hypothesis, for any integer $h\le UV< N$ we have
$$\left|\sum_{M<n\le M+N}\chi(dn+a)\right|\le \left|\sum_{M<n\le M+N}\chi(d(n+h)+a)\right|+ 2q^{1/9+\varepsilon}d^{2/3}h^{2/3},$$
for sufficiently large $q$. Hence
$$\left|\sum_{M<n\le M+N}\chi(dn+a)\right|\le \frac{1}{\#\cU \# \cV}|W|+2q^{1/9+\varepsilon}d^{2/3}(UV)^{2/3},$$
where
$$W=\sum_{u\in \cU}\sum_{v \in \cV}\sum_{M<n\le M+N}\chi(d(n+uv)+a)=
\sum_{u\in \cU}\chi(u)\sum_{M<n\le M+N}\sum_{v \in \cV}\chi((dn+a)u^{-1}+dv).
$$
We have
$$|W|\le \sum_{x=1}^{q}\nu(x)\left|\sum_{v \in \cV}\chi(x+dv)\right|,$$
where $\nu(x)$ is the number of representations $x \equiv (dn+a)u^{-1} \pmod q$ with $M<n \le M+N$ and $u \in \cU$.
Two applications of H\"{o}lder's inequality gives,
$$|W|^6\le \left(\sum_{x=1}^{q}\nu^2(x)\right)\left(\sum_{x=1}^{q}\nu(x)\right)^4\sum_{x=1}^{q}\left|\sum_{v \in \cV}\chi(x+dv)\right|^6.$$
As in Lemma~\ref{burgess 2}
$$\sum_{x=1}^{q}\nu(x)=N\# \cU, \quad  \sum_{x=1}^{q}\nu^2(x)\le \left(\frac{dNU}{q}+1\right)NUq^{o(1)},$$
and by Lemma~\ref{lem:Burgess-6}
\begin{align*}
\sum_{x=1}^{q}\left|\sum_{v \in \cV}\chi(x+dv)\right|^3&=\sum_{v_1,\dots v_4 \in \cV}\sum_{x=1}^{q}
\chi \left(\prod_{i=1}^{3}(x+dv_i)\right)\overline \chi \left(\prod_{i=4}^{6}(x+dv_i) \right) \\
&\le \sum_{v_1,\dots v_4 =1}^{V} \left| \sum_{x=1}^{q}
\chi \left(\prod_{i=1}^{2}(x+dv_i)\right)\overline \chi \left(\prod_{i=3}^{4}(x+dv_i) \right)\right| \\
&\le V^3q^{1+o(1)}.
\end{align*}
The above bounds give
$$|W|^6\le \left(\frac{dNU}{q}+1\right)NUq^{o(1)}(N\# \cU)^4\left(V^3q\right)q^{o(1)},$$
so that
$$\left|\sum_{M<n\le M+N}\chi(dn+a)\right|\le \left(\frac{d^{1/6}N}{V^{1/2}}+ \frac{q^{1/6}N^{5/6}}{U^{1/6}V^{1/2}}\right)q^{o(1)}+2q^{1/9+\varepsilon}d^{2/3}(UV)^{2/3}.$$
Recalling  the choice of $U$ and $V$ we get 
$$\left|\sum_{M<n\le M+N}\chi(dn+a)\right|\le \frac{d^{7/6}N}{q^{1/12+o(1)}}+q^{1/9+o(1)}d^{2/3}N^{2/3}
+\frac{2}{5}q^{1/9+\varepsilon}d^{2/3}N^{2/3},
$$
and since 
$$\frac{d^{7/6}N}{q^{1/12}}\le q^{1/9}d^{2/3}N^{2/3} \quad \text{when} \quad  dN\le q^{13/24},$$
we have by assumption on $N$ and $d$
\begin{align*}
\left|\sum_{M<n\le M+N}\chi(dn+a)\right|&\le q^{1/9+o(1)}d^{2/3}N^{2/3}
+\frac{2}{5}q^{1/9+\varepsilon}d^{2/3}N^{2/3} \\
&\le q^{1/9+\varepsilon}d^{2/3}N^{2/3}, 
\end{align*}
for sufficiently large $q$.
\end{proof}
Using Lemma~\ref{burgess 3} as in the proof of Lemma~\ref{burgess 2 1} we get,
\begin{lemma}
\label{burgess 3 1}
Let $\chi$ be a primitive character$\pmod q$ and suppose $(a,q)=1$, then for $N\le q^{23/42}$ we have
$$\left|\sum_{\substack{M<n\le M+N \\ (n,q)=1}}\chi(n+a)\right|\le q^{1/9+o(1)}N^{2/3}.$$
\end{lemma}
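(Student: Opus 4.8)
The plan is to mimic the proof of Lemma~\ref{burgess 2 1}, replacing the role played there by Lemma~\ref{burgess 2} with Lemma~\ref{burgess 3}. First I would remove the coprimality condition by M\"obius inversion over the divisors of $q$, writing
$$\sum_{\substack{M<n\le M+N \\ (n,q)=1}}\chi(n+a)=\sum_{d\mid q}\mu(d)\sum_{M/d<n\le (M+N)/d}\chi(dn+a),$$
and bounding the left-hand side by $\sum_{d\mid q}\bigl|\sum_{M/d<n\le (M+N)/d}\chi(dn+a)\bigr|$. Each inner sum runs over an interval of length $N/d$, so I would fix a threshold $Z$ (taking its integer part), apply Lemma~\ref{burgess 3} to the terms with $d\le Z$, and use the trivial bound $N/d$ for the terms with $d>Z$.

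The key observation is that the $d$-dependence cancels: Lemma~\ref{burgess 3} applied to an interval of length $N/d$ yields $q^{1/9+o(1)}d^{2/3}(N/d)^{2/3}=q^{1/9+o(1)}N^{2/3}$, which is independent of $d$. Since $q$ has $q^{o(1)}$ divisors, the total contribution of all $d\le Z$ is therefore $q^{1/9+o(1)}N^{2/3}$. For the tail one has $\sum_{d\mid q,\,d>Z}N/d\le q^{o(1)}N/Z$, so I would choose $Z=N^{1/3}q^{-1/9}$, which makes $N/Z=q^{1/9}N^{2/3}$ balance the main term and gives the asserted bound $q^{1/9+o(1)}N^{2/3}$. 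I note that this works uniformly in $N$: when $Z<1$ every divisor falls into the tail, which the same estimate $\sum_{d>Z}N/d\le q^{o(1)}N/Z$ still controls.

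The only genuine verification is that the hypotheses of Lemma~\ref{burgess 3}, namely $d\le q^{1/12}$ and $N/d\le q^{7/12}d^{-3/2}$, hold for every divisor $d\le Z$. Both conditions are of the form $d\le(\text{something})$, so it suffices to check them at the largest relevant value $d=Z$. The requirement $Z\le q^{1/12}$ reduces to $N\le q^{7/12}$, which is implied by the hypothesis, while $N/Z\le q^{7/12}Z^{-3/2}$, i.e. $Z\le q^{7/6}N^{-2}$, becomes after substituting $Z=N^{1/3}q^{-1/9}$ the inequality $N^{7/3}\le q^{23/18}$, that is $N\le q^{23/42}$. The main (mild) obstacle is therefore not any single analytic estimate but keeping the exponent bookkeeping consistent so that the single threshold $Z$ simultaneously tames the trivial tail and respects both hypotheses of Lemma~\ref{burgess 3}; the admissible range $N\le q^{23/42}$ in the statement emerges precisely as the binding constraint coming from the second hypothesis.
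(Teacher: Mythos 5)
Your proposal is correct and follows essentially the same route as the paper: M\"obius inversion over the divisors of $q$, Lemma~\ref{burgess 3} for the terms with $d\le Z$ (where, as you note, the $d$-dependence cancels), the trivial bound $N/d$ for $d>Z$, the balancing choice $Z=N^{1/3}q^{-1/9}$, and a condition check at $d=Z$ that yields exactly the range $N\le q^{23/42}$. Your bookkeeping is in fact the intended one: the paper's own proof contains typos (it cites Lemma~\ref{burgess 2} where Lemma~\ref{burgess 3} is meant, and writes $Z=\lfloor N^{1/3}/q^{-1/9}\rfloor$ where $N^{1/3}q^{-1/9}$ is intended), and your version resolves these consistently.
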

\begin{proof}
We have
\begin{align*}
\left|\sum_{\substack{M<n\le M+N \\ (n,q)=1}}\chi(n+a)\right|&=\left|\sum_{d|q}\mu(d)\sum_{M/d<n\le (M+N)/d}\chi(dn+a)\right| \\
&\le \sum_{d|q}\left| \sum_{M/d<n\le (M+N)/d}\chi(dn+a) \right|.
\end{align*}
Let $$Z=\left\lfloor\frac{N^{1/3}}{q^{-1/9}} \right \rfloor,$$
then by Lemma~\ref{burgess 2} we have
\begin{align*}
& \sum_{\substack{d|q \\ d\le Z}}\left| \sum_{M/d<n\le (M+N)/d}\chi(dn+a) \right|= \\ & \quad \quad \quad  \sum_{\substack{d|q\\ d\le Z}}\left| \sum_{M/d<n\le (M+N)/d}\chi(dn+a) \right|+ \sum_{\substack{d|q \\ d>Z}}\left| \sum_{M/d<n\le (M+N)/d}\chi(dn+a) \right| \\
& \quad \quad \quad \quad \quad \quad \quad \quad   \le  \sum_{\substack{d|q\\ d\le Z}}q^{1/9+o(1)}N^{2/3}+ \sum_{\substack{d|q\\ d> Z}}\frac{N}{d}.
\end{align*}
Since by choice of $Z$
\begin{align*}
 \sum_{\substack{d|q\\ d\le Z}}q^{1/9+o(1)}N^{2/3}+ \sum_{\substack{d|q\\ d> Z}}\frac{N}{d}&\le \left(q^{1/9}N^{2/3}+\frac{N}{Z} \right)q^{o(1)}
\le q^{1/9+o(1)}N^{2/3},
\end{align*}
we get the desired bound. It remains to check that the conditions of Lemma~\ref{burgess 2} are satisfied. For each $d|q$ with $d\le Z$ we need
$$\frac{N}{d}\le q^{7/12}d^{-3/2}, \quad d\le q^{1/12},$$
and from the choice of $Z$, this is satisfied for $N\le q^{23/42}.$
\end{proof}
\section{Bilinear Character Sums}
\begin{lemma}
\label{trans sum}
Let $\chi$ be a primitive character$\pmod q.$ Then for integers $u_1,u_2, \lambda$ we have
$$\left|\sum_{n=1}^{q}\chi(n+u_1)\overline \chi(n+u_2)e^{2\pi i \lambda n/q}\right|=
\left|\sum_{n=1}^{q}\chi(n+\lambda)\overline \chi(n)e^{2\pi i (u_1-u_2) n/q}\right|.
 $$
\end{lemma}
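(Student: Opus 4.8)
The plan is to prove the identity by Fourier-expanding the primitive character $\chi$ into additive characters, so that the roles of the multiplicative shifts $u_1,u_2$ and the additive frequency $\lambda$ get interchanged. Write $\tau(\chi)=\sum_{t=1}^{q}\chi(t)e^{2\pi i t/q}$ for the Gauss sum. The essential input is that, because $\chi$ is primitive, one has
$$\sum_{t=1}^{q}\chi(t)e^{2\pi i nt/q}=\overline\chi(n)\tau(\chi)$$
for \emph{every} integer $n$, including those with $(n,q)>1$. Equivalently, $\chi(m)=\tau(\overline\chi)^{-1}\sum_{s}\overline\chi(s)e^{2\pi i ms/q}$ and $\overline\chi(m)=\tau(\chi)^{-1}\sum_{t}\chi(t)e^{2\pi i mt/q}$.

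First I would substitute these two expansions for $\chi(n+u_1)$ and $\overline\chi(n+u_2)$ into the left-hand sum. This produces a triple sum over $n,s,t$ in which the dependence on $n$ occurs only through the additive character $e^{2\pi i(\lambda+s+t)n/q}$. Executing the inner sum over $n$ by orthogonality of the additive characters forces $t\equiv-\lambda-s\pmod q$ and contributes a factor $q$, collapsing the triple sum to a single sum over $s$.

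Next I would simplify what survives. Using $\chi(-\lambda-s)=\chi(-1)\chi(s+\lambda)$ and pulling out the $s$-independent phase $e^{-2\pi i u_2\lambda/q}$, the remaining sum over $s$ is exactly $\sum_{s}\chi(s+\lambda)\overline\chi(s)e^{2\pi i(u_1-u_2)s/q}$, which is the right-hand side. Hence the left-hand side equals
$$\frac{q\,\chi(-1)}{\tau(\chi)\tau(\overline\chi)}\,e^{-2\pi i u_2\lambda/q}\sum_{s=1}^{q}\chi(s+\lambda)\overline\chi(s)e^{2\pi i(u_1-u_2)s/q}.$$
Invoking the standard evaluation $\tau(\chi)\tau(\overline\chi)=\chi(-1)q$ for primitive $\chi$ (a consequence of $|\tau(\chi)|^2=q$ together with $\overline{\tau(\chi)}=\chi(-1)\tau(\overline\chi)$) shows the scalar prefactor is exactly $1$, while $e^{-2\pi i u_2\lambda/q}$ is unimodular. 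Taking absolute values then yields the claimed equality.

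The work here is essentially bookkeeping rather than conceptual: the care required is in tracking the phases $e^{2\pi i u_1 s/q}$ and $e^{2\pi i u_2 t/q}$ through the substitution $t\equiv-\lambda-s$, and in checking that the $\chi(-1)$ arising from $\chi(-\lambda-s)$ cancels exactly against the $\chi(-1)$ in $\tau(\chi)\tau(\overline\chi)=\chi(-1)q$. The single genuinely load-bearing point — and the main obstacle if $\chi$ were merely non-principal — is that the Fourier expansion of $\chi$ holds for \emph{all} residues $n$, which is precisely where primitivity enters; without it the expansion would fail at exactly the $n$ with $(n+u_i,q)>1$, and the clean exchange of $u_1,u_2$ with $\lambda$ would not go through.
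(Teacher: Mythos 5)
Your proposal is correct and takes essentially the same route as the paper: both expand $\chi(n+u_1)$ and $\overline\chi(n+u_2)$ into additive characters via Gauss sums (valid at every residue precisely because $\chi$ is primitive), collapse the sum over $n$ by orthogonality, and conclude by noting the surviving prefactor is unimodular. The only cosmetic difference is that you evaluate the prefactor exactly as $1$ using $\tau(\chi)\tau(\overline\chi)=\chi(-1)q$, whereas the paper simply takes absolute values using $|\tau(\chi)|\,|\tau(\overline\chi)|=q$.
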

\begin{proof}
Let $$\tau(\chi)=\sum_{n=1}^{q}\chi(n)e^{2\pi i n /q},$$ 
be the Gauss sum, so that 
$$\left|\tau(\chi)\right|=q^{1/2} \quad \text{and} \quad \sum_{n=1}^{q}\chi(n)e^{2\pi i an /q}=\overline\chi(a)\tau(\chi).$$
Writing
$$\chi(n+u_1)=\frac{1}{\tau(\overline\chi)}\sum_{\lambda_1=1}^{q}\overline\chi(\lambda_1)e^{2\pi i (n+u_1)\lambda_1 /q},$$
and
$$\overline\chi(n+u_1)=\frac{1}{\tau(\chi)}\sum_{\lambda_2=1}^{q}\chi(\lambda_2)e^{2\pi i (n+u_2)\lambda_2 /q},$$
we have 
\begin{align*}
& \sum_{n=1}^{q}\chi(n+u_1)\overline \chi(n+u_2)e^{2\pi i \lambda n/q}= \\ & \frac{1}{\tau(\chi)\tau(\overline \chi)} \sum_{\lambda_1=1}^{q}\sum_{\lambda_2=1}^{q}\overline \chi(\lambda_1)e^{2\pi i \lambda_1 u_2/q} \chi(\lambda_2)e^{2\pi i \lambda_2 u_2/q}\sum_{n=1}^{q}e^{2\pi i n(\lambda+\lambda_1+\lambda_2)},
\end{align*}
since
\begin{align*}
& \sum_{\lambda_1=1}^{q}\sum_{\lambda_2=1}^{q}\overline \chi(\lambda_1)e^{2\pi i \lambda_1 u_2/q} \chi(\lambda_2)e^{2\pi i \lambda_2 u_2/q}\sum_{n=1}^{q}e^{2\pi i n(\lambda+\lambda_1+\lambda_2)}= \\ &
\chi(-1)e^{-2\pi i u_2\lambda/q}q\sum_{\lambda_1=1}^{q}\chi(\lambda_1+\lambda)\overline \chi(\lambda_1)e^{2\pi i\lambda_1(u_1-u_2)/q},
\end{align*}
we have
\begin{align*}
\left| \sum_{n=1}^{q}\chi(n+u_1)\overline \chi(n+u_2)e^{2\pi i \lambda n/q} \right|&=\frac{q}{|\tau(\chi)|}\left|\sum_{n=1}^{q}\chi(n+\lambda)\overline \chi(n)e^{2\pi i n(u_1-u_2)/q}\right| \\
&=\left|\sum_{n=1}^{q}\chi(n+\lambda)\overline \chi(n)e^{2\pi i n(u_1-u_2)/q}\right|.
\end{align*}
\end{proof}
\begin{lemma}
\label{character sum bound 11}
Let $\chi$ be a primitive character$\pmod q$, then for integers $b, \lambda$ with $b \not \equiv 0 \pmod q$ we have
$$\left| \sum_{\substack{n=1 \\ (n,q)=1}}^{q}\chi\left(1+\frac{b}{n}\right)e^{2 \pi i \lambda n/q}\right|\le (b,q)q^{1/2+o(1)}. $$
\end{lemma}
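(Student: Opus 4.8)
The plan is to recognise the sum as a complete multiplicative--additive character sum, to move $b$ into the frequency of the additive character, and then to split the modulus into prime powers and estimate each local factor by a Weil-type bound in the spirit of Burgess~\cite{Burg1}. First I would clear the denominator: since $\overline\chi(n)=0$ whenever $(n,q)>1$, for $(n,q)=1$ we have $\chi(1+b/n)=\chi(n+b)\overline\chi(n)$, so
$$\sum_{\substack{n=1\\(n,q)=1}}^q\chi\(1+\frac b n\)e^{2\pi i\lambda n/q}=\sum_{n=1}^q\chi(n+b)\overline\chi(n)e^{2\pi i\lambda n/q}.$$
This is exactly the sum of Lemma~\ref{trans sum} with $u_1=b$ and $u_2=0$, so that lemma gives
$$\left|\sum_{\substack{n=1\\(n,q)=1}}^q\chi\(1+\frac b n\)e^{2\pi i\lambda n/q}\right|=\left|\sum_{n=1}^q\chi(n+\lambda)\overline\chi(n)e^{2\pi i b n/q}\right|.$$
The purpose of this step is that it transfers $b$ to the frequency of the additive character, which is where the factor $(b,q)$ will come from: any common factor of $b$ and $q$ simply lowers the conductor of the additive character.

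Next I would use that the transformed sum is complete modulo $q$ and that both $\chi$ and $n\mapsto e^{2\pi i bn/q}$ are multiplicative across the prime-power factorisation $q=\prod_p p^{a_p}$. By the Chinese Remainder Theorem the sum factors into a product of local sums of the shape $\sum_{n\bmod p^{a}}\chi_p(n+\lambda)\overline{\chi_p}(n)\,e^{2\pi i\beta_p n/p^{a}}$, where $\chi_p$ is the $p$-component of $\chi$ and $\beta_p$ carries the relevant CRT weight, with $(\beta_p,p^{a})$ governed by $(b,p^{a})$. It then suffices to bound each local factor by $(b,p^{a})\,p^{a/2+o(1)}$ and multiply, the product of the implied constants being absorbed into $q^{o(1)}$ through the estimate $8^{\omega(q)}=q^{o(1)}$ exactly as in Lemma~\ref{lem:Burgress-L7 1}.

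For the local estimate I would split according to whether $p\mid b$. When $p\nmid b$ the additive character is non-trivial and the factor is a genuine mixed multiplicative/additive sum to a prime power, which enjoys square-root cancellation $\ll p^{a/2}$ by Weil's bound when $a=1$ and by its prime-power refinement when $a\ge2$; this is the Burgess~\cite{Burg1} input, the same estimate underlying Lemma~\ref{lem:Burgress-L7 1}. When $p\mid b$ the additive character degenerates and the factor is at most $(b,p^{a})$ (for $p\mid\lambda$ it collapses to a Ramanujan-type sum, which is precisely why the final bound depends on $(b,q)$ and not on $(\lambda,q)$). Collecting the local bounds yields $(b,q)\,q^{1/2+o(1)}$.

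I expect the main obstacle to be this local analysis with the correct, uniform dependence on $(b,p^{a})$, rather than any global manipulation. The difficulty is that, unlike the pure multiplicative sums~\eqref{burgess sum 2}, the present sum cannot be treated by the elementary shift-and-H\"older amplification: for a linear additive character the differencing $n\mapsto n+h$ merely refactors $|T|^2$ as $T\overline T$ and produces no gain, so one is forced to use the full strength of the Weil estimate (and its prime-power analogue) for the mixed sum, together with a careful bookkeeping of the resonant primes $p\mid b$ that supply the factor $(b,q)$.
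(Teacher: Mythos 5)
Your overall route is the same as the paper's: write $\chi(1+b/n)=\chi(n+b)\overline\chi(n)$, use Lemma~\ref{trans sum} to move $b$ into the additive frequency, split $q$ into prime powers by the Chinese Remainder Theorem, and bound the local factors (the paper performs the swap at the end rather than at the start, which is immaterial). The genuine gap is your local estimate at primes dividing $b$: the claim that for $p\mid b$ ``the additive character degenerates and the factor is at most $(b,p^{a})$'' is false. Take $q=p^{2}$, $\chi$ primitive $\pmod{p^{2}}$, $b=p\beta$ and $\lambda=p\mu$ with $(\beta\mu,p)=1$, and write $\chi(1+pt)=e^{2\pi i ct/p}$ with $(c,p)=1$ (this uses primitivity). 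Since $p\mid\lambda$, both $n$ and $n+\lambda$ are units, so $\chi(n+\lambda)\overline\chi(n)=\chi(1+p\mu\overline{n})=e^{2\pi i c\mu\overline{n}/p}$, while $e^{2\pi i bn/p^{2}}=e^{2\pi i\beta n/p}$; hence the local sum equals
$$\sum_{\substack{n=1\\(n,p)=1}}^{p^{2}}e^{2\pi i(c\mu\overline{n}+\beta n)/p}=p\sum_{\substack{n_{0}=1\\(n_{0},p)=1}}^{p}e^{2\pi i(c\mu\overline{n_{0}}+\beta n_{0})/p},$$
that is, $p$ times a Kloosterman sum. Averaging over $\beta$ shows that for a suitable $\beta$ coprime to $p$ this Kloosterman sum has modulus at least $\sqrt{p-1}$, so the local factor has size $\gg p^{3/2}$, exceeding your claimed bound $(b,p^{2})=p$ (it is of course consistent with the lemma, since $(b,q)q^{1/2}=p^{2}$). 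The gcd does not cap the local sum; it multiplies the square-root bound. The factor collapses to a genuine Ramanujan sum only in the fully degenerate case $p^{a}\mid\lambda$.

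Proving the correct local bound $(b,p^{a})p^{a/2}$ is exactly the content of the paper's proof, and it sits at the step you defer to ``Weil and its prime-power refinement'', which covers only $p\nmid b$. Following Burgess~\cite{Burg1} (see also~\cite[Lemmas 12.2 and 12.3]{IwKow}), the paper reduces each prime-power sum, by the stationary-phase structure of $\chi$ on $1+p^{\alpha}\Z$, to counting solutions of a quadratic congruence $\lambda n^{2}\equiv cb\pmod{p^{\alpha}}$ with $(n,p)=1$: if a solution exists then $(\lambda,p^{\alpha})=(cb,p^{\alpha})$, and dividing through leaves at most four square roots, so the count is at most $4(\lambda,p^{\alpha})$, and the local bound is this count times the square root of the modulus --- gcd times $p^{a/2}$, never the gcd alone. (In your swapped orientation $b$ and $\lambda$ exchange roles, the count is at most $4(b,p^{a})$, and the product over $p\mid q$ of the bounds $(b,p^{a})p^{a/2}$ does give $(b,q)q^{1/2+o(1)}$, so your collection step would survive.) In short, the architecture is right, but at the primes with $0<v_{p}(b)$ and $0<v_{p}(\lambda)<a$ --- precisely the resonant primes you yourself flag as the difficulty --- the sum is a nontrivial Kloosterman/Sali\'e-type sum, your asserted bound fails, and the Burgess congruence-counting argument cannot be bypassed.
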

\begin{proof}
Consider first when $\lambda \equiv 0 \pmod q$. Then from Lemma~\ref{trans sum} we have
\begin{align*}
\left| \sum_{\substack{n=1 \\ (n,q)=1}}^{q}\chi\left(1+\frac{b}{n}\right)e^{2 \pi i \lambda n/q}\right|&=
\left|\sum_{n=1}^{q}|\chi(n)|e^{2\pi i b n/q}\right|=\left|\sum_{\substack{n=1 \\ (n,q)=1}}^{q}e^{2\pi i b n/q}\right|,
\end{align*}
and from~\cite[Equation~3.5]{IwKow} we have
$$\left|\sum_{\substack{n=1 \\ (n,q)=1}}^{q}e^{2\pi i b n/q}\right|\ll (b,q),$$
so that
$$\left| \sum_{\substack{n=1 \\ (n,q)=1}}^{q}\chi\left(1+\frac{b}{n}\right)e^{2 \pi i \lambda n/q}\right|\ll (b,q)\le (b,q)q^{1/2+o(1)}.$$
Next consider when $\lambda \not \equiv 0 \pmod q$. We first note that if $\chi$ is a character$\pmod p,$ with $p$ prime, then we have from the Weil bound, see~\cite[Theorem 2G]{Schm}
$$\left| \sum_{\substack{n=1 \\ (n,p)=1}}^{p}\chi\left(1+\frac{b}{n}\right)e^{2 \pi i \lambda n/p}\right|\ll p^{1/2}.$$
For $p$ prime and integers $\lambda,b,c,\alpha$, let
$N(\lambda,b, c,p^{\alpha})$ denote the number of solutions to the congruence
\begin{equation}
\label{quad congruence}
\lambda n^2\equiv cb \pmod {p^{\alpha}}, \quad  1\le n \le p^{\alpha}, \quad (n,p)=1.
\end{equation}
Then we have
\begin{equation}
\label{cong bound}
N(\lambda,b, c,p^{\alpha})\le 4(\lambda,p^{\alpha}),
\end{equation}
since if there exists a solution $n$ to~\eqref{quad congruence} then we must have $(\lambda,p^{\alpha})=(cd,p^{\alpha})$ so that we arrive at the congruence
\begin{equation}
\label{aaaaa} 
n^2\equiv a \pmod {p^{\alpha}/(p^{\alpha},\lambda)},
\end{equation}
for some integer $a$ with $(a,p)=1$. Since there are at most $4$ solutions to~\eqref{aaaaa} we get~\eqref{cong bound}.
Suppose $q=p^{2\alpha}$ is an even prime power and let $c$ be defined by
\begin{equation*}
\label{chi add}
\chi(1+p^{\alpha})=e^{2\pi i c/p^{\alpha}},
\end{equation*}
then from the argument of~\cite[Lemma~2]{Burg1}  (see also~\cite[Lemma 12.2]{IwKow}) we have by~\eqref{cong bound}
\begin{align*}
\left| \sum_{\substack{n=1 \\ (n,q)=1}}^{q}\chi\left(1+\frac{b}{n}\right)e^{2 \pi i \lambda n/q}\right| &\ll  p^{\alpha}N(\lambda,b,c) \ll(\lambda,q)q^{1/2}.
\end{align*}
Suppose next $q=p^{2\alpha+1}$ is an odd prime power, with $p>2$. Let $c$ be defined by
$$\chi(1+p^{\alpha+1})=e^{2\pi i c/p^{\alpha}},$$
then from the argument of~\cite[Lemma~4]{Burg1} (see also~\cite[Lemma~12.3]{IwKow})
\begin{align*}
\left| \sum_{\substack{n=1 \\ (n,q)=1}}^{q}\chi\left(1+\frac{b}{n}\right)e^{2 \pi i \lambda n/q}\right| &\ll 
p^{(2\alpha+1)/2}N(\lambda,b, c,p^{\alpha})+p^{\alpha}N(\lambda,b, c,p^{\alpha+1}) \\ &\ll (\lambda,q)q^{1/2}.
\end{align*}
Finally if $q=2^{2\alpha+1}$, then from the argument of~\cite[Lemma~3]{Burg1}
\begin{align*}
\left| \sum_{\substack{n=1 \\ (n,q)=1}}^{q}\chi\left(1+\frac{b}{n}\right)e^{2 \pi i \lambda n/q}\right|&\ll 2^{1/2}2^{\alpha}N(\lambda,b, c,p^{\alpha}) \\ &\ll  (\lambda,q)q^{1/2}.
\end{align*}
Combining the above bounds gives the desired result when $q$ is a prime power. For the general case, suppose $\chi$ is a primitive character$\pmod q$ and let $q=p_1^{\alpha_1}p_2^{\alpha_2}...p_k^{\alpha_k}$ be the prime factorization of $q$. By the Chinese Remainder Theorem we have
$$\chi=\chi_1\chi_2...\chi_k,$$
where each $\chi_i$ is a primitive character$\pmod {p_i^{\alpha_i}}$. Let $q_i=q/p^{\alpha_i}$, then by the above bounds and another application of the Chinese remainder theorem (see~\cite[Equation 12.21]{IwKow}), for some absolute constant $C$
\begin{align*}
& \left|\sum_{\substack{n=1 \\ (n,q)=1}}^{q}\chi\left(1+\frac{b}{n}\right)e^{2 \pi i \lambda n/q}\right|=\\ &\left|\sum_{\substack{n_1=1 \\ (n_1,p_1)=1}}^{p_1^{\alpha_1}}\dots \sum_{\substack{n_k=1 \\ (n_k,p_k)=1}}^{p_k^{\alpha_k}}\chi_1\left(1+\frac{b}{\sum_{i=1}^{k}n_iq_i}\right)e^{2\pi i \lambda n_1/p_1^{\alpha_1}}...
\chi_k\left(1+\frac{b}{\sum_{i=1}^{k}n_iq_i}\right)e^{2 \pi i \lambda n_k/p_i^{\alpha_k}} \right| \\
& = \left|\prod_{i=1}^{k}\left(\sum_{\substack{n_i=1 \\ (n_i,p_i)=1}}^{p_i^{\alpha_i}}\chi_i\left(1+\frac{b}{n_iq_i}\right)e^{2\pi i \lambda n_i/p_i^{\alpha_i}}\right)\right|\le \prod_{i=1}^{k}C(\lambda,p_i^{\alpha_i})p_i^{\alpha_i/2}\le (\lambda,q)q^{1/2+o(1)},
\end{align*}
and the result follows from Lemma~\ref{trans sum}.
\end{proof}
\begin{lemma}
\label{lem:DoubleChar2}
 Let $K, L$ be natural numbers and
for any two sequences $(\alpha_k)_{k=1}^K$
and $(\beta_\ell)_{\ell=1}^L$ of complex numbers supported on integers coprime to $q$ and any integer
$a$ coprime to $q$, let
$$
W = \sum_{k\le K}\sum_{\ell\le L}
\alpha_k\,\beta_\ell\,\chi(k\ell+a).
$$
Then
$$
W \le AB\left (KL^{1/2}+q^{1/4}K^{1/2}L+\frac{KL}{q^{1/4}}\right)q^{o(1)},
$$
where
$$
A=\max_{k\le K}|\alpha_k| \mand B=\max_{\ell\le L}|\beta_\ell|.
$$
\end{lemma}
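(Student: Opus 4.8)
The plan is to bound $W$ by the Cauchy--Schwarz inequality applied in the $k$ variable, followed by completion of the resulting inner sum and an appeal to Lemma~\ref{character sum bound 11}. First I would pull out $A=\max_k|\alpha_k|$ and apply Cauchy--Schwarz over $k\le K$ to obtain
$$|W|^2\le A^2K\sum_{k\le K}\left|\sum_{\ell\le L}\beta_\ell\chi(k\ell+a)\right|^2.$$
Expanding the square and interchanging the order of summation gives
$$|W|^2\le A^2K\sum_{\ell_1,\ell_2\le L}\beta_{\ell_1}\overline{\beta_{\ell_2}}\sum_{k\le K}\chi(k\ell_1+a)\overline\chi(k\ell_2+a).$$
For fixed $\ell_1,\ell_2$ coprime to $q$ the function $k\mapsto\chi(k\ell_1+a)\overline\chi(k\ell_2+a)$ is periodic modulo $q$, so the inner sum over $k\le K$ can be completed using additive characters.

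The key step is to evaluate the completed sum. Writing $\chi(k\ell_i+a)=\chi(\ell_i)\chi(k+a\ell_i^{-1})$ and setting $u_i=a\ell_i^{-1}\pmod q$, the complete sum at frequency $\lambda$ equals $\chi(\ell_1)\overline\chi(\ell_2)$ times $\sum_{k=1}^q\chi(k+u_1)\overline\chi(k+u_2)e^{2\pi i\lambda k/q}$. By Lemma~\ref{trans sum} the modulus of this equals that of $\sum_{(n,q)=1}\chi(1+\lambda/n)e^{2\pi i(u_1-u_2)n/q}$, which is precisely the sum estimated in Lemma~\ref{character sum bound 11}. Hence for $\lambda\not\equiv0$ the completed sum is $\le(\lambda,q)q^{1/2+o(1)}$, uniformly in $\ell_1,\ell_2$, while for $\lambda\equiv0$ it reduces to a Ramanujan sum bounded by $(u_1-u_2,q)=(\ell_1-\ell_2,q)$, using $(a,q)=1$ and the fact that inversion modulo $q$ preserves the gcd with $q$.

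Assembling these, I would use the standard completion weight $c(\lambda)=\sum_{m\le K}e^{-2\pi i\lambda m/q}$, for which $|c(0)|=K$ and, splitting $\lambda$ according to $d=(\lambda,q)$ and applying the Dirichlet-kernel $L^1$ bound on each progression, $\sum_{\lambda=1}^{q-1}|c(\lambda)|(\lambda,q)\le q^{1+o(1)}$. This bounds the $\lambda\ne0$ contribution to each inner $k$-sum by $q^{1/2+o(1)}$, while the $\lambda=0$ term contributes $\tfrac{K}{q}(\ell_1-\ell_2,q)$. Summing over $\ell_1,\ell_2\le L$ and using $\sum_{\ell_1,\ell_2\le L}(\ell_1-\ell_2,q)\le(Lq+L^2)q^{o(1)}$ then yields
$$\sum_{k\le K}\left|\sum_{\ell\le L}\beta_\ell\chi(k\ell+a)\right|^2\le B^2\left(KL+\frac{KL^2}{q}+L^2q^{1/2}\right)q^{o(1)}.$$
Substituting into the Cauchy--Schwarz bound and taking square roots produces the three terms $KL^{1/2}$, $KL/q^{1/2}$ and $q^{1/4}K^{1/2}L$; since $KL/q^{1/2}\le KL/q^{1/4}$ this implies the claimed estimate.

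I expect the main obstacle to be the completion step together with the correct bookkeeping of the greatest common divisor factors: one must verify that the transformation of Lemma~\ref{trans sum} matches the exact shape of Lemma~\ref{character sum bound 11}, so that the completion frequency $\lambda$ plays the role of the numerator $b$ there, and that the resulting weighted divisor sum $\sum_\lambda|c(\lambda)|(\lambda,q)$ is genuinely of size $q^{1+o(1)}$ rather than larger. The estimate $\sum_{\ell_1,\ell_2\le L}(\ell_1-\ell_2,q)\le(Lq+L^2)q^{o(1)}$ controlling the diagonal $\lambda=0$ contribution is routine but should be carried out with care, as it is the source of the final $KL/q^{1/4}$ term.
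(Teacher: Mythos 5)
Your proof is correct and takes essentially the same approach as the paper: Cauchy--Schwarz in $k$, completion of the inner sum over $k$, and Lemma~\ref{character sum bound 11} (accessed through Lemma~\ref{trans sum}) applied to the completed sums. The only deviation is dual gcd bookkeeping: you invoke Lemma~\ref{character sum bound 11} with the completion frequency $\lambda$ in the role of $b$, so the gcd factor is $(\lambda,q)$ and is controlled by the weighted sum $\sum_{\lambda\not\equiv 0}|c(\lambda)|(\lambda,q)\le q^{1+o(1)}$, whereas the paper takes $b=a\ell_1^{-1}-a\ell_2^{-1}$, getting the factor $(\ell_1-\ell_2,q)$ summed over pairs $\ell_1\ne\ell_2$; both versions are valid, and your separate Ramanujan-sum treatment of the zero frequency even produces the marginally sharper term $KL/q^{1/2}$, which implies the stated $KL/q^{1/4}$.
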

\begin{proof}
By the Cauchy-Schwartz inequality
\begin{align*}
|W|^2&\le A^2K\sum_{k\le K}\left|\sum_{\ell \le L}\beta_{\ell}\chi(k\ell+a)\right|^2 \\
&\le A^2B^2K^2L+ \left| \sum_{k\le K}\sum_{\substack{\ell_1, \ell_2 \le L \\ \ell_1 \neq \ell_2}}\beta_{\ell_1}\overline\beta_{\ell_2}\chi(k\ell_1+a)
\overline\chi(k\ell_2+a)\right|.
\end{align*}
Let 
$$W_1=\sum_{k\le K}\sum_{\substack{\ell_1, \ell_2 \le L \\ \ell_1 \neq \ell_2}}\beta_{\ell_1}\overline\beta_{\ell_2}\chi(k\ell_1+a)
\overline\chi(k\ell_2+a),$$
then we have
\begin{align*}
|W_1| &\le  \frac{B^2}{q}\sum_{\substack{\ell_1<\ell_2\le L \\ (\ell_1,q)=1 \\ (\ell_2,q)=1}}\left| \sum_{s=1}^{q}\sum_{k\le K}e^{-2\pi i s k/q}\sum_{\lambda=1}^{q}\chi(\lambda+a\ell_1^{-1})
\overline\chi(\lambda +a\ell_2^{-1})e^{2\pi i s \lambda/q} \right| \\
&\le  \frac{B^2}{q}\sum_{\substack{\ell_1<\ell_2\le L \\ (\ell_1,q)=1 \\ (\ell_2,q)=1}}\sum_{s=1}^{q} \left|\sum_{k\le K}e^{-2\pi i s k/q}\right|\left|\sum_{\lambda=1}^{q}\chi(\lambda+a\ell_1^{-1})
\overline\chi(\lambda +a\ell_2^{-1})e^{2\pi i s \lambda/q} \right|.
\end{align*}
By Lemma~\ref{character sum bound 11},
\begin{align*}
& \sum_{\substack{\ell_1<\ell_2\le L \\ (\ell_1,q)=1 \\ (\ell_2,q)=1}}\sum_{s=1}^{q} \left|\sum_{k\le K}e^{-2\pi i s k/q}\right|\left|\sum_{\lambda=1}^{q}\chi(\lambda+a\ell_1^{-1})
\overline\chi(\lambda +a\ell_2^{-1})e^{2\pi i s \lambda/q} \right|\ll \\
&  \sum_{\ell_1<\ell_2\le L}\sum_{s=1}^{q}\min{\left(K,\frac{1}{||s/q||}\right)}(\ell_1-\ell_2,q)q^{1/2+o(1)}, \\
\end{align*}
and since
\begin{align*}
\sum_{\substack{\ell_1,\ell_2 \le L \\ \ell_1 \neq \ell_2}}(\ell_1-\ell_2,q)& \ll\sum_{ \ell \le L}\sum_{\substack{\ell_1,\ell_2 \le L \\
\ell_1 < \ell_2 \\ \ell_1-\ell_2=\ell}}(\ell,q)
\le L\sum_{d|q}\sum_{ \substack{\ell \le L \\ d|\ell}}1\le L^2q^{o(1)},
\end{align*}
we get
\begin{align*}
|W_1|&\le \frac{B^2}{q}\left(\sum_{s=1}^{q}\min{\left(K,\frac{1}{||s/q||}\right)}\right)q^{1/2+o(1)}L^2 \le B^2\left(1+\frac{K}{q}\right)q^{1/2+o(1)}L^2,
\end{align*}
so that
\begin{align*}
|W|^2\le A^2B^2K\left(KL+\left(1+\frac{K}{q}\right)q^{1/2+o(1)}L^2\right).
\end{align*}
\end{proof}
Next, we use an idea of Garaev~\cite{Gar} to derive a variant of
Lemma~\ref{lem:DoubleChar2} in which the summation
limits over $\ell$ depend on the parameter $k$.

\begin{lemma}
\label{lem:DoubleCharVar4}
Let $ K, L$ be natural numbers  and
let the  sequences $(L_k)_{k=1}^K$ and $(M_k)_{k=1}^K$
of nonnegative integers be such that $M_k< L_k\le L$ for each $k$.
For any two sequences $(\alpha_k)_{k=1}^K$
and $(\beta_\ell)_{\ell=1}^L$ of complex numbers supported on integers coprime to $q$ and for any integer
$a$ coprime to $q$, let
$$
\widetilde{W} = \sum_{k\le K}\sum_{M_k<\ell\le L_k}
\alpha_k\,\beta_\ell\,\chi(k\ell+a).
$$
Then
$$
\widetilde{W} \ll \left(KL^{1/2}+(1+K^{1/2}q^{-1/2})q^{1/4}K^{1/2}L\right)(Lq)^{o(1)},
$$

where
$$
A=\max_{k\le K}|\alpha_k|\mand B=\max_{\ell\le L}|\beta_\ell|.
$$
\end{lemma}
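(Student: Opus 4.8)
The plan is to reduce $\widetilde{W}$ (with $k$-dependent summation limits) to the fixed-range bilinear form controlled by Lemma~\ref{lem:DoubleChar2}, following Garaev's device for handling variable limits. The key observation is that the indicator of the interval $M_k<\ell\le L_k$ can be detected by a completion using additive characters. First I would write, for each $k$, the constraint $M_k<\ell\le L_k$ via the standard identity
$$
\sum_{M_k<\ell\le L_k}\beta_\ell\,\chi(k\ell+a)=\frac{1}{q}\sum_{s=1}^{q}\left(\sum_{M_k<m\le L_k}e^{-2\pi i sm/q}\right)\sum_{\ell\le L}\beta_\ell\,e^{2\pi i s\ell/q}\chi(k\ell+a),
$$
so that the inner geometric factor $\sum_{M_k<m\le L_k}e^{-2\pi i sm/q}$ is bounded by $\min(L,\|s/q\|^{-1})$ uniformly in $k$. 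This separates the $k$-dependence of the limits from the bilinear structure, at the cost of introducing the extra modulation $e^{2\pi i s\ell/q}$ in the $\ell$-sum and an outer sum over $s$.

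Next I would absorb the modulation into the $\beta$-coefficients: for each fixed $s$, set $\beta_\ell^{(s)}=\beta_\ell\,e^{2\pi i s\ell/q}$, which is again supported on integers coprime to $q$ with the same bound $B=\max|\beta_\ell|$. Then
$$
\widetilde{W}\ll\frac{1}{q}\sum_{s=1}^{q}\min\!\left(L,\frac{1}{\|s/q\|}\right)\left|\sum_{k\le K}\sum_{\ell\le L}\alpha_k\,\beta_\ell^{(s)}\,\chi(k\ell+a)\right|,
$$
and each inner double sum is exactly of the form handled by Lemma~\ref{lem:DoubleChar2}, giving the bound $AB\bigl(KL^{1/2}+q^{1/4}K^{1/2}L+KL q^{-1/4}\bigr)q^{o(1)}$ uniformly in $s$. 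Pulling this bound out of the $s$-sum, I would use the elementary estimate $\sum_{s=1}^{q}\min(L,\|s/q\|^{-1})\ll q\log q=q^{1+o(1)}$, so the factor $q^{-1}$ in front cancels the $q$ from the $s$-summation, leaving only the bilinear bound times $q^{o(1)}$.

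The main obstacle is bookkeeping rather than a genuine difficulty: one must verify that the output of Lemma~\ref{lem:DoubleChar2} matches the claimed shape, and in particular that the middle term $q^{1/4}K^{1/2}L$ gets rewritten as $(1+K^{1/2}q^{-1/2})q^{1/4}K^{1/2}L$. To see this, note that $KL^{1/2}+q^{1/4}K^{1/2}L+KLq^{-1/4}$ has a third term $KLq^{-1/4}=(K^{1/2}q^{-1/2})\cdot q^{1/4}K^{1/2}L$, so combining the second and third terms yields precisely $(1+K^{1/2}q^{-1/2})q^{1/4}K^{1/2}L$, and the first term $KL^{1/2}$ survives unchanged. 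Thus the three-term bound of Lemma~\ref{lem:DoubleChar2} collapses into the two-term form stated here, and the final factor $(Lq)^{o(1)}$ comes from absorbing both the $q^{o(1)}$ of Lemma~\ref{lem:DoubleChar2} and the $\log q$ from the completion, together with any dependence on $L$ hidden in the coprimality supports. A minor point to check is that the supports of $\beta_\ell^{(s)}$ on integers coprime to $q$ are preserved so that Lemma~\ref{lem:DoubleChar2} genuinely applies for every $s$.
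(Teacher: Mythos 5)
Your overall strategy is the paper's own strategy --- Garaev's completion device to turn the $k$-dependent limits into a fixed-range bilinear form, followed by Lemma~\ref{lem:DoubleChar2} (and your recombination of its three terms into the stated two-term bound is exactly right) --- but you complete modulo $q$, whereas the paper completes modulo $L$ with the characters $\e_L(\cdot)$, and this difference creates two problems. First, orthogonality mod $q$ detects $\ell\equiv m\pmod q$, not $\ell=m$; your opening identity is therefore only valid when $L\le q$, a hypothesis the lemma does not carry. This is harmless in the paper's application (there $L\le N\le q$), but the mod-$L$ completion is precisely what makes the paper's proof work for all $L$, and it also yields the factor $\log L$ rather than $\log q$.

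Second, and more seriously, your displayed inequality
$$
\widetilde{W}\ll\frac{1}{q}\sum_{s=1}^{q}\min\!\left(L,\frac{1}{\|s/q\|}\right)\left|\sum_{k\le K}\sum_{\ell\le L}\alpha_k\,\beta_\ell^{(s)}\,\chi(k\ell+a)\right|
$$
is false as written: the geometric factor $c_{k,s}=\sum_{M_k<m\le L_k}e^{-2\pi i sm/q}$ depends on $k$, and one cannot replace a $k$-dependent complex factor sitting inside the $k$-sum by its uniform modulus bound pulled outside the absolute value. (With $T_k$ denoting the inner $\ell$-sum, take $K=2$, $c_{1,s}=1$, $c_{2,s}=-1$, $\alpha_1T_1=1$, $\alpha_2T_2=-1$: the left side is $2$ while your right side vanishes.) The repair is the same normalization you already performed on the $\beta$'s, and is exactly what the paper does with its numbers $\eta_{k,r}$: write $c_{k,s}=\eta_{k,s}\min\left(L,\|s/q\|^{-1}\right)$ with $\eta_{k,s}\ll 1$, absorb $\eta_{k,s}$ into new coefficients $\widetilde\alpha_{k,s}=\alpha_k\eta_{k,s}$, which have the same support and sup norm $\ll A$, and apply Lemma~\ref{lem:DoubleChar2} to the pair $(\widetilde\alpha_{k,s})$, $(\beta_\ell^{(s)})$. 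Since that lemma's bound depends only on the sup norms $A$ and $B$, the resulting estimate is uniform in $s$, and the rest of your argument, including $\frac{1}{q}\sum_{s=1}^{q}\min\left(L,\|s/q\|^{-1}\right)\ll\log q$ (which again uses $L\le q$), goes through.
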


\begin{proof}
For real $z$ we denote
$$
\e_L(z)=\exp(2\pi i z/L).
$$
For each inner sum, using the orthogonality of exponential functions, we have
\begin{equation*}
\begin{split}
\sum_{M_k<\ell\le L_k}\beta_\ell\,\chi(k\ell+a)
& =\sum_{\ell \le L}\sum_{M_K<s\le L_k} \beta_\ell\,\chi(k\ell+a)\cdot\frac{1}{L} \sum_{-\frac12 L<r\le\frac12 L}
\e_L(r(\ell-s))\\
& =\frac{1}{L}\sum_{-\frac12 L<r\le\frac12 L} \sum_{M_k< s \le L_k} \e_L(-r s)\sum_{\ell\le L}
\beta_\ell\,\e_L(r \ell)\,\chi(k\ell+a).
\end{split}
\end{equation*}
In view of~\cite[Bound~(8.6)]{IwKow}, for each $k\le K$
and every integer $r$ such that $|r|\le \tfrac12 L$ we can write
$$
\sum_{M_k<s\le L_k}\e_L(-rs)
=\sum_{s\le L_k}\e_L(-rs)
-\sum_{s\le M_k}\e_L(-rs)
=\eta_{k,r}\frac{L}{|r|+1},
$$
for some complex number $\eta_{k,r}\ll 1$. Thus, if we put
$\widetilde\alpha_{k,r}=\alpha_k\,\eta_{k,r}$ and
$\widetilde\beta_{\ell,r}=\beta_\ell\,\e_L(r \ell)$,
it follows that
$$
\sum_{K_0<k\le K}\sum_{M_k<\ell\le L_k}\alpha_k\,\beta_\ell\,\chi(k\ell+a)
=\sum_{-\frac12 L<r\le\frac12 L}\frac{1}{|r|+1}
\sum_{k\le K}\sum_{\ell\le L}\widetilde\alpha_{k,r}
\widetilde\beta_{\ell,r}\,\chi(k\ell+a).
$$
Applying Lemma~\ref{lem:DoubleChar2} with the sequences
$(\widetilde\alpha_{k,r})_{k=1}^K$ and $(\widetilde\beta_{\ell,r})_{\ell=1}^L$,
and noting that
$$
\sum_{-\frac12 L<r\le\frac12 L}\frac{1}{|r|+1}\ll\log  L,
$$
we derive the stated bound.
\end{proof}

\section{Proof of Theorem~\ref{thm:main}}
Considering the sum
$$S_a(q;N)=\sum_{n\le N}\Lambda(n)\chi(n+a),$$
we apply Lemma~\ref{lem:Vau}  with 
$$f(n)=\begin{cases} \chi(n+a) \quad \text{if} \ (n,q)=1 \\
0 \quad \text{otherwise}. 
\end{cases}
$$
For $\Sigma_1$ in Lemma~\ref{lem:Vau} we apply the trivial estimate,
$$\Sigma_1 = \left|\sum_{n\le U}\Lambda(n)f(n)\right| \ll U.$$
\subsection{The sum $\Sigma_2$}
We have
$$\Sigma_2  =  (\log UV)\sum_{\substack{v \le UV \\ (v,q)=1}}\left|\sum_{\substack{s\le N/v \\ (s,q)=1}}\chi(sv+a)\right|
= (\log UV)\sum_{\substack{v \le UV \\ (v,q)=1}}\left|\sum_{\substack{s\le N/v \\ (s,q)=1}}\chi(s+av^{-1})\right|.
$$
 By Lemma~\ref{polya vinogradov}, since $N\le q$
\begin{align*}
\sum_{\substack{v \le Nq^{-43/72} \\ (v,q)=1}}\left|\sum_{\substack{s\le N/v \\ (s,q)=1}}\chi(s+av^{-1})\right| &\le
\sum_{\substack{v \le Nq^{-43/72} \\ (v,q)=1}}q^{1/2+o(1)} \\ &\le Nq^{-7/72+o(1)}
 ,\end{align*}
by Lemma~\ref{burgess 2 1}
\begin{align*}
\sum_{\substack{ Nq^{-43/72}<v \le Nq^{-11/24} \\ (v,q)=1}}\left|\sum_{\substack{s\le N/v \\ (s,q)=1}}\chi(s+av^{-1})\right|&
\le q^{3/16+o(1)}N^{1/2}\left(\sum_{\substack{ Nq^{-43/72}<v \le Nq^{-11/24}}}v^{-1/2}\right) \\ &\le Nq^{-1/24+o(1)},
\end{align*}
and by Lemma~\ref{burgess 3 1}
\begin{align*}
\sum_{\substack{ Nq^{-11/24}<v \le UV \\ (v,q)=1}}\left|\sum_{\substack{s\le N/v \\ (s,q)=1}}\chi(s+av^{-1})\right|&
\le q^{1/9+o(1)}N^{2/3}\left(\sum_{\substack{ Nq^{-11/24}<v \le UV}}v^{-2/3} \right) \\
&\le q^{1/9+o(1)}N^{2/3}(UV)^{1/3}.
\end{align*}
Combining the above bounds gives
$$\Sigma_2\le \left(Nq^{-1/24+o(1)}+q^{1/9}N^{2/3}(UV)^{1/3}\right)q^{o(1)}.$$
\subsection{The sum $\Sigma_3$}
As above, we get
$$ \Sigma_3= (\log N)\sum_{\substack{v\le V \\ (v,q)=1}}\,\max_{w\ge 1}
\left|\sum_{\substack{w\le s\le N/v \\ (s,q)=1}}\chi(s+av^{-1})\right|\le \left(Nq^{-1/24+o(1)}+q^{1/9}N^{2/3}V^{1/3}\right)q^{o(1)}.$$
\subsection{The sum $\Sigma_4$}
For the sum $\Sigma_4$, we have
$$
\Sigma_4 = \sum_{\substack{U<k\le\frac{N}{V} \\
\gcd(k,\,q)=1}}\Lambda(k)\sum_{\substack{V<\ell\le N/k}}A(\ell)\chi(k\ell+a),
$$
where
$$
A(\ell)=\sum_{d|\ell,\,d\le V}\mu(d), \quad \gcd(\ell, q)=1,
$$
and
$$
A(\ell) = 0, \quad \gcd(\ell, q)>1.
$$
Note that
$$
\Lambda(k)\le\log k\le k^{o(1)} \mand |A(\ell)|\le\tau(\ell)\le\ell^{o(1)}.
$$

We separate the sum  $\Sigma_{4}$ into $O(\log N)$ sums of the form
$$
W(K) = \sum_{\substack{K<k\le 2K \\
\gcd(k,\,q)=1}}\Lambda(k)\sum_{\substack{V<\ell\le N/k}}A(\ell)\chi(k\ell+a),
$$
where $U \le K \le N/V$. By Lemma~\ref{lem:DoubleCharVar4} we have
\begin{equation}
\label{eq: WK extra large K}
\begin{split}
W(K)&\le \left(K^{1/2}N^{1/2}+(1+K^{1/2}q^{-1/2})q^{1/4}K^{-1/2}N\right)q^{o(1)} \\
&\le K^{1/2}N^{1/2}+q^{1/4}K^{-1/2}N+Nq^{-1/4+o(1)},
\end{split}
\end{equation}
so that summing over the $O(\log{N})$ values of $U\le K \le NV^{-1}$ gives
$$\Sigma_4\le \left(NV^{-1/2}+q^{1/4}NU^{-1/2}+Nq^{-1/4}\right)(Nq)^{o(1)}.$$
\subsection{Concluding the Proof}
Combining the estimates for $\Sigma_1, \dots, \Sigma_4$ gives
\begin{align*}
S_a(q;N)\le \left(Nq^{-1/24+o(1)}+U+NV^{-1/2}+q^{1/4}NU^{-1/2}+q^{1/9}N^{2/3}(UV)^{1/3}\right)(Nq)^{o(1)}.
\end{align*}
We choose $U=q^{1/2}V$ to balance the terms $NV^{-1/2}$ and $q^{1/4}NU^{-1/2}$ which gives
$$S_a(q;N)\le \left(Nq^{-1/24+o(1)}+U+NV^{-1/2}+q^{5/18}N^{2/3}V^{2/3}\right)(Nq)^{o(1)}.$$
Choosing $V=N^{2/7}q^{-5/21}$ to balance the terms $NV^{-1/2}$ and $q^{5/18}N^{2/3}V^{2/3}$ we get
$$S_a(q;N)\le \left(Nq^{-1/24+o(1)}+q^{11/42}N^{2/7}+q^{5/42}N^{6/7}\right)(Nq)^{o(1)}.$$
We have $U\ge V \ge 1$ when $N\ge q^{5/6}$, which is when the term $q^{5/42}N^{6/7}$ becomes nontrivial. Also we need
$$UV=q^{1/42}N^{4/7}\le N$$
which is satisfied for $N\ge q^{1/18}$ which we may suppose since otherwise the bound is trivial. Finally we note that we may remove the middle term, since it is dominated by the last term for $N\ge q^{1/4}$.

\end{document}